\def\cal{\mathcal}
\newtheorem{theorem}[subsection]{Theorem}
\newtheorem{proposition}[subsection]{Proposition}
\newtheorem{lemma}[subsection]{Lemma}
\numberwithin{equation}{subsection}
\def \Amalg{\mathbin{\raise .5pt%
	\hbox{$\scriptstyle \amalg$}}}
\title[Markov Partitions and Hyperbolic Components]{Persistent Markov partitions and hyperbolic components of rational maps}
\author{Mary Rees}
\begin{document}
\begin{abstract}
Markov partitions persisting in  a neighbourhood of hyperbolic components of rational maps were constructed under the condition that closures of Fatou components are disjoint in \cite{R1}. Given such a partition, we characterize all nearby hyperbolic components in terms of the symbolic dynamics. This means we can count them, and also obtain topological information. We also determine extra conditions under which all nearby type IV hyperbolic components are given by matings. These are probably the first known results of this type.\end{abstract}

\maketitle

\section{Introduction}\label{1}

Studies in complex dynamics are invariably as much about variation of dynamics in a parameter space as about the dynamics of individual maps in the parameter space. This an attractive area of study because of the rich variation, even in parameter spaces of small dimension: complex dimension one is usually ample. The low dimension  is one reason why the study is also relatively tractable. Another reason is the availability of extremely useful tools, of which the  Yoccoz puzzle is a key example. The Yoccoz puzzle is a sequence of successively finer Markov partitions (obtained by backwards iteration of a single Markov partition), and the use of Markov partitions is of course ubiquitous in dynamics. There is an associated Yoccoz ``parapuzzle'', a sequence of successively finer partitions of parameter space. This illustrates the most basic principle in dynamics, that information about parameter space near a map $f$ can be obtained from the dynamics of $f$. In complex dynamics, this informations is often remarkably complete. There have been a number of generalisations of the Yoccoz puzzle and parapuzzle, stretching back some 20 years. See, for example, Roesch \cite{Roesch}. In \cite{R1} we gave a rather general construction of a Markov partition for a geometrically finite rational map for which all Fatou components are topological discs and  have disjoint closures. We use the following definition.

\noindent {\textbf{Definition}} 
A {\em{Markov partition for $f$}} is a set ${\cal{P}}=\{ P_1,\cdots P_r\} $ such that:
\begin{itemize}
\item $\overline{{\rm{int}}(P_i)}=P_i$;
\item $P_i$ and $P_j$ have disjoint interiors if $i\ne j$;
\item $\cup _{i=1}^rP_i=\overline{\mathbb C}$;
\item each $P_i$ is a union of connected components of $f^{-1}(P_j)$ for varying $j$. 
\end{itemize}

If the sets $P_i$ are closed topological discs such that the intersection of any two is at most a finite union of topological intervals, then $G=\cup _{i=1}^r\partial P_i$ is a connected graph with finitely many  edges and vertices satisfying $G\subset f^{-1}(G)$. Conversely, given a  connected graph $G\subset f^{-1}(G)$ with finitely many edges and vertices, and such that all complementary components are topological discs, the  set of closures of the complementary components of $G$ is a Markov partition consisting of closed topological discs with boundaries which intersect in at most finitely many topological intervals. The construction we gave showed that a graph satisfying certain mild conditions can be approximated arbitrarily  closely by an isotopic graph $G'$ such that $G'\subset f^{-N}(G')$ for some integer $N$, and that some  $G\subset \cup _{i\ge 0}f^{-i}(G')$ satisfies all the above properties. 

In section 3 of \cite{R1}, we restricted to  a geometrically finite map $f$ of degree 2, for which the closures of the Fatou components were disjoint closed topological discs (as above), and  a one-dimensional parameter space $V_k$ of rational maps $g$  including $f$, with numbered critical points $c_1(g)$ and $c_2(g)$, with $c_1(g)$ of period $k$ under $g$, quotiented by M\"obius conjugacy preserving numbering of critical points.  We started with a graph $G$ with $G\subset f^{-1}(G)$  and investigated the associated partitions of parameter space, locally near   $f$. We were allowing $f$, there, to have  a parabolic cycle, but could equally well have considered a neighbourhood of the closure of a hyperbolic component, by starting from a hyperbolic map $f$, and making an initial choice of graph disjoint from the closures of periodic Fatou components. We showed in \cite{R1}  how to use backward iterates of $G$ to obtain successively finer partitions of the parameter space near $f$. In this paper we want to extend some aspects of this study. In particular, we want to consider {\em{type IV}} hyperbolic components in a neighbourhood in $V_k$ of the closure of the hyperbolic component of $f$, and to study the correspondence between these hyperbolic components and the periodic points of $f$. A {\em{type IV}} hyperbolic component, is a component of hyperbolic quadratic rational maps for which there are two distinct periodic cycles of Fatou components --- for which, of course, each cycle has to contain one of the two critical points. The first result of the paper concerning this is \ref{2.4}. This theorem is a characterisation of the hyperbolic components in terms of the symbolic dynamics, and complements the results of \cite{R1}, which associated symbolic dynamics to all maps admitting the Markov partition.

We also want to consider connections with known topological models of parts of parameter space, and, in particular, with matings. Mating was invented by Douady and Hubbard in the 1980's. It gives a way of constructing some rational maps, up to topological conjugacy,  and something more in some cases, from a pair of polynomials of the same degree, with locally connected Julia sets. In particular, a pair of hyperbolic critically finite quadratic polynomials which are not in conjugate limbs of the Mandelbrot set can be mated, and the mating is a critically periodic rational map, up to topological conjugacy.  It is known that not all type IV hyperbolic quadratic rational maps are represented up to topological conjugacy by matings, although there are some slices of parameter space, and some parts of slices of parameter space for which it is true. It is true in $V_2$, as is pointed out in one of the early results in \cite{Asp-Yam}. It is also true in some regions of $V_3$, as pointed out in \cite{R5}. One of the aims of the present paper is to consider the question locally, in particular, to consider hyperbolic components near the closure of a hyperbolic component, and ask whether all type IV hyperbolic components, which are sufficiently near, are conjugate to matings --- or are not all so conjugate. We shall find examples where all nearby type IV hyperbolic components are indeed represented by matings. These are new examples, as we shall be considering rational maps for which the closures of Fatou components are all disjoint. Our condition will be that the  lamination equivalence relation, associated to the mating  representing a hyperbolic critically finite  rational map --- of whose closure of hyperbolic component we are taking a neighbourhood --- is particularly simple. Counterexamples, for which it is definitely not true that all nearby hyperbolic components are represented by matings, are harder to prove completely, simply because there are a lot of matings available, and current techniques make it very hard to discount all of them. But there are plenty of candidates for which all nearby hyperbolic components are unlikely to be represented by matings, and for which the results of Section \ref{2} do give a description of all nearby hyperbolic components in terms of symbolic dynamics. 

This has a bearing on the question of continuity of mating.  Because of well-known, but unpublished, examples of Adam Epstein, it is known to be a problem to try to use this model on a space of complex dimension two, even near a geometrically finite map with a parabolic periodic point. But restricted to slices of complex dimension one, there is a chance that the combinatorial model is the right one in some parts of the parameter space. This is a question that has been considered and answered by Ma Liangang in his thesis \cite{Ma}, and being prepared for publication, under the sort of conditions imposed, and  by the sort of techniques used, in this paper. Rather comprehensive continuity  results have been proved in the very different  case of $V_2$ --- where the union of closures of Fatou components is $\overline{\mathbb C}$ ---  by Dudko \cite{Dudko}.
\newpage
\section{Basic set-up}\label{2}

\subsection{}\label{2.1}

We write $V_k$ for the set of quadratic rational maps $f$, with numbered critical points $c_1(f)$ and $c_2(f)$, quotiented by M\"obius conjugacy preserving numbering of critical points, and with $c_1(f)$ of period $k$. 
Our rational map $f$ will usually be a hyperbolic post-critically-finite quadratic rational map.  A hyperbolic quadratic post-critically-finite rational map is in $V_k$ for some $k\ge 1$, and is of {\em{type II, III or IV}}. Here, {\em{type II}}, means that both critical points are in the same periodic orbit,  {\em{type III}} means that one critical point, say $c_1(f)$, is periodic and the other, $c_2(f)$, is strictly pre-periodic, and  {\em{type IV}} means that the two critical points are in distinct periodic orbits. Any hyperbolic component intersecting $V_k$, for $k>1$, contains a unique post-critically-finite map, and is accordingly described as being of type II, III or IV.

We make the condition  that the closures of all Fatou components of $f$ are disjoint. Equivalently, we can stipulate that the closures of periodic Fatou components are disjoint. For $f\in V_k$, this is only possible if $k\ge 3$.

There are then many choices of  piecewise smooth graph $G_0=G_0(f)\subset \overline{\mathbb C}$, with finitely many edges and vertices, such that:
\begin{enumerate}[a)]
\item $G_0$ does not intersect the closure  of any periodic Fatou component of $f$, and the intersection of $G_0$ with any Fatou component is connected;
\item the components of $\overline{\mathbb C}\setminus G_0$ are all open topological discs, and their closures are all closed topological discs;
\item Any component of $\overline{\mathbb C}\setminus G_0$ contains at most one periodic Fatou component of $f$;
\item any edge of $G_0$ has two distinct endpoints, and exactly three edges of $G_0$, meet at any vertex;
\item the closures of any two components of $\overline{\mathbb C}\setminus G_0$ intersect in at most one edge of $G_0$, including endpoints.
\end{enumerate}

Theorem 1.2 of \cite{R1} then provides the existence of a graph $G'$ isotopic and arbitrarily close to $G_0$ such that $G'\subset f^{-N}(G')$, where $N$ is an integer which is bounded in terms of the degree of closeness required, and also provides the existence of a graph $G=G(f)$ with finitely many vertices and edges, satisfying  $G\subset \bigcup _{i\ge 0}f^{-i}(G')$, with $G\subset f^{-1}(G)$. Then the set $\mathcal{P}=\mathcal{P}(f)$ of closures of components of $\overline{\mathbb C}\setminus G$ is a Markov partition. It is standard, in dynamics, to encode points by admissible words in the elements of a Markov partition. A word $P_{i_0}\cdots P_{i_n}$, with $P_{i_j}\in \mathcal{P}$ for $0\le j\le n$, is {\em{admissible}} if $\bigcap _{j=0}^nf^{-j}(P_{i_j})\ne \emptyset $. Similarly, an infinite word $P_{i_0}P_{i_1}\cdots $ is admissible if $\bigcap _{j=0}^\infty f^{-j}(P_{i_j})\ne \emptyset  $. If $\mathcal{P}$ is a Markov partition, as defined here, then $P_{i_0}\cdots P_{i_n}$ is admissible if and only if $P_{i_j}\cap f^{-1}(P_{i_{j+1}})\ne \emptyset $ for all $0\le j<n$, and, similarly $P_{i_0}P_{i_1}\cdots $ is admissible if and only if $P_{i_j}\cap f^{-1}(P_{i_{j+1}})\ne \emptyset $ for all $j\ge 0$.

Under our hypotheses, any set in $\mathcal{P}$ contains at most one periodic Fatou component. We write $P^n(v_i)=P^n(f,v_i)$ for the set in $\mathcal{P}_n(f)$ which contains $v_i(f)=f(c_i(f))$.  Assuming that $f$ is hyperbolic post-critically-finite, either $v_2$ is periodic of some period $m$, or $v_2$ is in the backward orbit of $v_1$, in which case we write $m=k$.  In both cases, we can assume that $P^{j+1}(f,v_1)\subset \mbox{int}(P^j(f,v_1))$ for $0\le i\le k-1$ by ensuring a similar condition for the components of $\overline{\mathbb C}\setminus G_0(f)$ containing the Fatou components intersecting the forward orbit of $c_1$, which can itself be done, for example, by using the $\delta $-neighbourhood of the closure of these Fatou components for a suitable $\delta >0$, using the Poincar\'e metric. We then also have $P^{j+1}(v_1)\subset \mbox{int}(P^j(v_1))$ for all $j\ge 0$.  Similarly, we can ensure that $P^{j+1}(v_2)\subset \mbox{int}(P^j(v_2))$ for all $j\ge 0$. We will also choose $G_0$ so that the image under $f$ of any set in $\mathcal{P}(f)$ which contains a periodic Fatou component, also contains a single periodic Fatou component. The theorem  in \cite{R1} then ensures that the same is true for $G$.

  If $P_{i_0}P_{i_1}\cdots $ is an infinite admissible word, then any component of 
  $$\bigcap _{j=0}^\infty f^{-j}(P_{i_j})$$ is either a Fatou component or a point. If some component is a Fatou component, then the word must be eventually periodic, of the same eventual period as the Fatou component.  Further, under the given hypotheses, $\bigcap _{j=0}^\infty f^{-j}(P_{i_j})$ is either a finite union of Fatou components or is totally disconnected. At least in some cases, $\bigcap _{j=0}^\infty f^{-j}(P_{i_j})$ has more than one component. For let $z\in P^0(v_i)\setminus \{ v_i\} $. Write $P^0(v_i)=P_{i_1}$ and let $P_{i_0}$ be the set of ${\mathcal{P}}$ with $f^{-1}(P_{i_1})\subset P_{i_0}$. Let $P_{i_\ell }$ be the set of ${\mathcal{P}}$ with $f^{\ell-1}(v_i)\in P_{i_\ell}$, so that $z\in \bigcap _{\ell =0}^\infty f^{-\ell }(P_{i_{\ell +1}})$. Then the two elements $z_1$ and $z_2$ of $f^{-1}(z)$ are both in $\bigcap _{\ell =0}^\infty f^{-\ell}(P_{i_{\ell }})$.  

We can reduce the number of points represented by  at least some words, by considering words in $\mathcal{P}_n=\mathcal{P}_n(f)=\bigvee _{i=0}^nf^{-i}(\mathcal{P})$, where the sets in the partition $\mathcal{P}_n(f)$ are the components of non-empty sets $\bigcap _{j=0}^nf^{-j}(P_{i_j})$. If $Q_{i_0}\cdots Q_{i_p}$ is a word with letters in $\mathcal{P}_n$, and $Q_{i_j}\subset P_{i_j}\in \mathcal{P}$ and $Q_{i_p}$ is a component of $\bigcap _{j=0}^nf^{-j}(P_{i_{p+j}})$ then 
$$\bigcap _{j=0}^pf^{-j}(Q_{i_j})\subset \bigcap _{j=0}^{p+n}f^{-j}(P_{i_j}),$$
 and if $\bigcap _{j=0}^nf^{-j}(P_{i_{p+j}})$  is not connected, then $\bigcap _{j=0}^pf^{-j}(Q_{i_j})$ is strictly smaller than $\bigcap _{j=0}^{p+n}f^{-j}(P_{i_j})$.
 
\subsection{Variation of symbolic dynamics}\label{2.3}
 
It is a common idea in complex dynamics to look at variation of symbolic dynamics, although usually for specific Markov partitions, for example, the sequence of Markov partitions forming the Yoccoz puzzle. We start with a set $V_{k,0}\subset V_k$ such that a set $G_0(g)\cup \{ v_1(g),v_2(g)\} $ varies isotopically for $g\in V_{k,0}$, where $G_0(g)$ is a zero-level graph.  The maximal sets on which $G_n(g)$ varies isotopically form a partition of $V_{k,0}$ for each $n\ge 0$. This was considered in \cite{R1}. In this paper, we concentrate on type IV hyperbolic components. As one might expect, these are derived from periodic words in partition elements in some sense. In fact, we have the following theorem. We write $F_i(g)$ for the Fatou component for $g$ which contains $v_i(g)$, for $i=1$, $2$. 

\begin{theorem}\label{2.4} In terms of the given sequence of Markov partitions with the conditions previously specified, type IV post-critically-finite maps $g$ in $V_{k,0}$ are characterised by:
\begin{enumerate}[(i)] 
\item a finite sequence $f_i$, $1\le i\le N$,  of rational maps with $f_1=f$ and $f_N=g$;
\item integers $m_i$ for $1\le i\le N$ with $m_1=m$, and $j_i\ge 1$ for $1\le i<N$, and $v_2(f_i)$ is of period $m_i$, and 
\begin{equation}\label{2.4.1}m_{i+1}>\sum _{\ell =1}^ij_\ell m_\ell =n_\ell \end{equation}
 for $i\le N-2$, and either (\ref{2.4.1}) holds for $i=N-1$ also, or $m_N=j_{N-1}m_{N-1}$, $P^{n_{N-1}}(f_{N-1},v_2)$ varies isotopically to $P^{n_{N-1}}(f_{N},v_2)$ with $f_N^{jm_{N-1}}(v_2)\in P^{n_{N-1}}(f_{N},v_2)$ for all $0\le j\le j_{N-1}$, that is,  $f_N$ is a {\em{tuning}} of $f_{N-1}$;
\item integers $n_i$, for $0\le i\le N-1$, with $n_0=0$ and $n_i=n_{i-1}+j_im_i$ for $i>0$, such that there is an isotopy between $G_n(f_i)\cup \bigcup _{j=1}^kf_i^j(F_1(f_i))\cup \{ v_2(f_i)\} $ and $G_n(f_{i+1})\cup \bigcup _{j=1}^kf_{i+1}^j(F_1(f_{i+1}))\cup \{ v_2(f_{i+1})\} $ for $n\le n_i$, and between  $G_{n}(f_i)\cup \bigcup _{j=1}^kf_i^j(F_1(f_i))$ and $G_{n}(f_i)\cup \bigcup _{j=1}^kf_{i+1}^j(F_1(f_{i+1}))$ for $n\le n_i+m_{i+1}$, but this isotopy cannot be extended to map $v_2(f_i)$ to $v_2(f_{i+1})$ for $n=n_i+m_{i+1}$, and might not be possible for other $n$ with $n_i<n<n_i+m_{i+1}$ -- except in the case when $i=N-1$.
\end{enumerate}

\end{theorem}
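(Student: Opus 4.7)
The plan is to prove the characterization in both directions by tracking the symbolic address of $v_2$ as we deform within $V_{k,0}$, using Lemma \ref{2.2} as the key rigidity input: it tells us that any periodic point whose forward orbit avoids $P^0(v_1)$ is uniquely determined by its symbolic sequence in $\mathcal{P}_n$ for $n$ large enough.

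For the forward direction, I would build the sequence $(f_i, m_i, j_i, n_i)$ inductively, starting from $f_1 = f$ with $v_2(f_1)$ of period $m_1 = m$. Because $V_{k,0}$ is defined so that the graphs $G_n(g) \cup \{v_1(g), v_2(g)\}$ vary isotopically on sub-pieces for each fixed $n$, there is a well-defined symbolic address for $v_2(g)$ read against the transported partition of $f_1$, and I let $n_1 = j_1 m_1$ be the largest level at which the isotopy of $G_n(f_1) \cup \bigcup_{j=1}^k f_1^j(F_1(f_1)) \cup \{v_2(f_1)\}$ extends to the analogous object for $g$. Beyond $n_1$ the address of $v_2$ changes, so I choose $f_2 \in V_{k,0}$ to be a post-critically-finite map whose $v_2$ has the smallest period $m_2 > n_1$ compatible with the next segment of $v_2(g)$'s address, and agreeing with $g$ up to level $n_2 = n_1 + j_2 m_2$. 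Iterating this gives a sequence of strictly increasing periods and increasingly fine agreement with $g$; the isotopy clauses (iii) are forced by the construction, since up to $n_i$ everything agrees including $v_2$, the graph alone agrees up to the finer level $n_i + m_{i+1}$ (the new $m_{i+1}$-cycle is only detected at that level), while the isotopy cannot identify $v_2(f_i)$ with $v_2(f_{i+1})$ at $n_i + m_{i+1}$ by construction.

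Termination is the crux and splits into two scenarios matching the ``or'' in (ii): either the address of $v_2(g)$ is determined at some finite stage $N$ by the data accumulated so far, yielding the strict inequality (\ref{2.4.1}) for $i = N-1$ and $f_N = g$ by Lemma \ref{2.2}, or the refinement process stalls because the relevant combinatorics are renormalizable, in which case $f_N$ is a tuning of $f_{N-1}$ and $m_N = j_{N-1} m_{N-1}$. The converse direction is more direct: given data satisfying (ii)--(iii), the isotopies preserve $\bigcup_{j=1}^k f^j(F_1)$ so $c_1(f_N)$ is periodic of period $k$, while (iii) at stage $N-1$ forces $v_2(f_N)$ to be periodic and to lie in a different partition element from any iterate of $c_1(f_N)$; hence the two critical orbits are disjoint and $f_N$ is type IV post-critically-finite.

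The main obstacle will be the existence step for the intermediate $f_i$: at each stage one must produce a post-critically-finite map in $V_{k,0}$ with $v_2$ of the minimally required period realizing the prescribed symbolic data. This reduces to a Thurston-style realizability argument that leverages the persistent Markov partitions from \cite{R1} to identify admissible combinatorial data with actual post-critically-finite parameters, together with the dichotomy that decides whether the period $m_{i+1}$ must satisfy the strict bound (\ref{2.4.1}) or whether the alternative tuning clause is triggered. Showing no Thurston obstruction arises on $V_{k,0}$, and making the tuning/non-tuning dichotomy precise, is where the bulk of the work lies; the rest is essentially bookkeeping once Lemma \ref{2.2} pins down periodic points by their symbolic orbits.
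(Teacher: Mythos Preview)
Your inductive skeleton matches the paper's: start from $f_1=f$, let $j_1$ be the deepest level at which $v_2(g)$ still sits in the transported $P^{jm}$, find the first new return time $m_2$, produce $f_2$ with $v_2(f_2)$ of period $m_2$, and iterate. But the two load-bearing steps are not supported by the tools you invoke.

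First, Lemma~\ref{2.2} is a purely dynamical statement: for a \emph{fixed} map it pins down a periodic point by its itinerary. It says nothing about locating a post-critically-finite parameter in $V_{k,0}$, so it cannot serve as your ``key rigidity input'' for existence or uniqueness of the $f_i$. In the paper, uniqueness of each $f_i$ comes from Thurston's theorem for critically finite branched coverings, and existence is \emph{not} obtained by ruling out a Thurston obstruction. Instead the paper works directly in parameter space: by Theorem~3.2 of \cite{R1} the piece $V(P^0,\ldots,P^{m_2+j_1m};g)$ is contractible and the map $h\mapsto\varphi_{h,g}(h^{m_2}(v_2(h)))$ sends its boundary homeomorphically onto $\partial P^{j_1m}(g)$. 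Hence $h\mapsto\varphi_{h,g}(h^{m_2}(v_2(h)))-\varphi_{h,g}(v_2(h))$ has degree one on the boundary and must vanish somewhere inside, producing the required $h=f_2$ with $h^{m_2}(v_2(h))=v_2(h)$. Your proposed route through Thurston realizability would require manufacturing a candidate branched cover and excluding obstructions; that is both harder and unnecessary given the parapuzzle structure already supplied by \cite{R1}.

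Second, the inequality $m_{i+1}>n_i$ in (\ref{2.4.1}) is not bookkeeping. It needs the Markov argument that $g^{j_1m}$ carries $P^{j_1m}(g,v_2)\setminus P^{(j_1+1)m}(g,v_2)$ into $P^0(g,v_2)\setminus P^m(g,v_2)$, and more generally that the chain $f_{i+1}^{j_\ell m_\ell}:P^{n_\ell}(f_{i+1})\to P^{n_{\ell-1}}(f_{i+1})$ forces $f_{i+1}^{n_i}(v_2)$ to land outside $P^{m_1}(f_{i+1})$. You should make this explicit rather than folding it into the termination dichotomy.
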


\begin{proof}  As before, write $P^j(h,v_i)$ for the set of ${\mathcal{P}}_j(h)$ containing $v_i(h)$, for all $h\in V_{k,0}$. Since $v_1(h)$ is of period $k$ for all $h\in V_{k,0}$ we have 
$$h^k(P^{j+k}(h,v_1))=P^j(v_1)$$ for all $j\ge 0$, and since $v_2(f)$ is of period $m$, we have 
$$f^m(P^{j+m}(f,v_2))=P^j(f,v_2)$$
 for all $j\ge 0$. Now $f^m:P^m(f,v_2)\to P^0(f,v_2)$ is a covering of degree two.

Now let $g\in V_{k,0}$ be hyperbolic of type IV.  Let $S_{m,h}$ be the $2$-valued local inverse of $h^m$  defined on $P^0(h,v_2)$ for all $h\in V_{k,1}$,  with $S_{m,h}(P^0(h,v_2))=P^m(h,v_2)\subset P^0(h,v_2)$. There is a natural isotopy between $G_n(g)$ and $G_n(f)$ so long as $v_2(g)\in S_{m,g}^j(P^0(g,v_2))$. If this is true for all $j\ge 0$ then the period $m_2$ of $v_2(g)$ is $j_1m$ for some $j_1>1$, and we have $N=2$ and $f_2=g$.  If this is not the case, let  $j_1$ be the largest integer $j$ such that $v_2(g)\in S_{m,g}^j(P^0(g,v_2))$, that is, the largest integer $j$ with $S_{m,g}^j(P^0(g,v_2))=P^{mj}(g,v_2)$.  By hypothesis, we have $j_1\ge 1$. For any $h\in V_{k,1}$ with $v_2(h)\in S_m^{j_1}(P^0(h,v_2))$, we see that $G_{(j_1+1)m}(h)\cap P^{j_1m}(h,v_2)$ and $G_{(j_1+1)m}(f)\cap P^{j_1m}(f,v_2)$ are isotopic, and thus there is a natural correspondence between the sets of $\mathcal{P}_{(j_1+1)m}(h)$ in $P^{j_1m}(h,v_2)$ and the sets of $\mathcal{P}_{(j_1+1)m}(f)$ in $P^{j_1m}(f,v_2)$. For any such set $Q(h)$ in  $\mathcal{P}_{(j_1+1)m}(h)$, we write $Q(f,h)$ for the corresponding set in  $\mathcal{P}_{(j_1+1)m}(f)$. We should, of course, be careful with this. For example, $P^{m(j_1+1)}(g,f)$ is well-defined, but $P^{m(j_1+1)}(g,f)\ne P^{m(j_1+1)}(g,v_2)$. 

Since  $v_2(g)$ is periodic, there is a least integer $m_2>0$ such that $P^{m_2+j_1m}(g,v_2)$ is a component of $g^{-m_2}(P^{j_1m}(g,v_2))$. This is also the least integer $m_2$ such that $g^{m_2}(v_2(g))\in P^{j_1m}(g,v_2)$, and by the Markov properties we must have $g^i(P^{m_2+j_1m}(g))\cap P^{j_1m}(g,v_2)=\emptyset $ for $0<i<m_2$.  Then $m_2>j_1m$ because $g^{j_1m}$ maps $P^{j_1m}(g,v_2)\setminus P^{(j_1+1)m}(g,v_2)$ to $P^0(g,v_2)\setminus P^m(g,v_2)$. 

Now write $P^n(g,v_2)=P^n(g)$ for $0\le n\le m_2+j_1m$. Following the notation of \cite{R1}, we write $V(P^0,\cdots P^{m_2+j_1m};g)$ for the set of $h\in V_{k,1}$  for which  $(P^0(h,g),\cdots,P^{m_2+j_1m}(h,g))$ is {\em{equivalent}} to $(P^0(g,v_2),\cdots ,P^{m_2+j_1m}(g,v_2))$, meaning that there is a homeomorphism 
$$\varphi _{h,g}:G_{m_2+j_1m}(h)\cup \{ v_2(h)\} \to G_{m_2+j_1m}(g)\cup \{ v_2(g)\} ,$$
which therefore maps $P^n(h,g)$ to $P^n(g,v_2)$ for $n\le m_2+j_1m$.  In fact we can also choose $\varphi _{h,g}$ so that
$$\varphi _{h,g}\circ h=g\circ \varphi _{h,g}\mbox{ on }G_{m_2+j_1m}(h).$$
  By Theorem 3.2 of \cite{R1},  the set $V(P^0,\cdots P^{m_2+j_1m};g)$, which, of course, contains $g$, is connected. Also by Theorem 3.2 of \cite{R1} there exists $h\in V(P^0,\cdots P^{m_2+j_1m};g)$ such that $v_2(h)\in \bigcap _{j=0}^\infty S_{m_2,h}^{j}(P^{j_1m}(h,g))$, where $S_{m_2,h}$ is the branch of $h^{-m_2}$ which maps $P^{j_1m}(h,v_2)$ to $P^{m_2+j_1m}(h,g)$. We claim that there is such an $h$ with $v_2(h)$ of period $m_2$. One way to proceed is to use Theorem  3.2 of \cite{R1}, which says that  the map 
$$h\mapsto \varphi _{h,g}(h^{m_2}(v_2(h)))$$ 
maps $\partial V(P^0,\cdots P^{m_2+j_1m};g)$ homeomorphically onto $\partial P^{j_1m}(g)$. Hence
$$h\mapsto \varphi _{h,g}(h^{m_2}(v_2(h)))-\varphi _{h,g}(v_2(h))$$
is of degree one, and since $V(P^0,\cdots P^{m_2+j_1m};g)$ is contractible by Theorem 3.2 of \cite{R1} --- because the complement in $V_{k,1}$ is connected --- the map must have a zero, which is a point $h$ with $v_2(h)$ of period $m_2$. However it is found, Thurston's theorem for post-critically-finite branched coverings ensures that this map $h$ is unique. 

Now we write $f_{2}$ for this $h\in V(P^0,\cdots P^{m_2+j_1m};g)$. If $P^{j_1m+jm_2}(g,f_2)=P^{j_1m+jm_2}(g)$ for all $j\ge 0$ then the period of $v_2(g)$ is $j_2m_2$ for some $j_2>1$ and we have $N=3$ and $m_3=j_2m_2$. If not,  then  we can continue with $P^{j_1m}(g)=P^{n_1}(g)$ replacing $P^0$, with  $P^{m_2+j_1m}(g)$ replacing $P^m(g)$, and $m_2$ replacing $m$, and find $j_2\ge 1$, $m_3$, $f_{3}$, in the same way as $j_1$, $m_2$ and $f_2$ were found. Similarly, inductively, if $i<N$, we find $f_{i+1}$ from $f_i$ with $P^{n_i}(g)$ replacing $P^0(g)$ and $P^{n_i+m_{i+1}}(g)$ replacing $P^m(g)$. If $f_{i+1}$ is not a tuning of $f_i$ --- which is the case by definition if $i<N-1$ --- then we obtain (\ref{2.4.1}), because 
$$f_{i+1}^{j_\ell m_\ell}:P^{n_\ell }(f_{i+1})\to P^{n_{\ell -1}}(f_{i+1})$$
for $1\le \ell \le i+1$ and if 
$$p_{\ell,i} =\sum _{n=\ell }^{i+1}j_nm_n$$ then 
$$f_{i+1}^{p_{\ell,i}}(v_2)\in P^{n_{\ell -1}}(f_{i+1})\setminus P^{n_{\ell -1}+m_{\ell }}(f_{i+1}).$$
Therefore, since 
$$p_{0,i}=\sum _{i=0}^\ell j_\ell m_\ell $$ 
and 
$$f_{i+1}^{p_{i,0}}(v_2)\notin P^{m_{i+1}}(f_{i+1}),$$
we have (\ref{2.4.1}), as required.

\end{proof}
We also have the following counting result.

\begin{theorem}\label{2.5} Hyperbolic components in $V(P^0;f)\subset V_{k,0}$, for which $F_2(g)$ is of period $n$, are in two-to-one correspondence with points in $P^0(f)$ of period $n$.\end{theorem}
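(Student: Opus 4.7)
The plan is to reduce the theorem to a statement about periodic symbolic itineraries using Theorem \ref{2.4} and Lemma \ref{2.2}, and then to identify the 2-to-1 factor with the two fixed points of the degree-$2$ return map on the critical Fatou component.

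For the map from components to periodic points, let $V \subset V(P^0;f)$ be a type IV hyperbolic component with $F_2(g)$ of period $n$, and let $g$ be its unique postcritically-finite centre supplied by Theorem \ref{2.4}. The itinerary of $v_2(g)$ under $g$ in the finer partition $\mathcal{P}_n(g)$ is periodic of period $n$; transporting it by the combinatorial equivalence $\varphi_{g,f}$ of the proof of Theorem \ref{2.4} produces a periodic $f$-itinerary of length $n$ starting in $P^0(f,v_2)$, which by Lemma \ref{2.2} is realised by a unique periodic point $z_1(V) \in P^0(f)$ of period $n$.

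For the reverse direction, I start from a periodic point $z \in P^0(f)$ of period $n$ and read off its $f$-itinerary to recover the combinatorial data $(m_i,j_i)$ of Theorem \ref{2.4} by recording the first-return times of the orbit of $z$ to successively finer elements $P^{n_i}(f,v_2)$. Applying at each stage the degree-one zero-finding step $h \mapsto \varphi_{h,f_i}(h^{m_{i+1}}(v_2(h))) - \varphi_{h,f_i}(v_2(h))$ from that proof, together with Thurston rigidity, produces a PCF centre $g = f_N$ realising the itinerary and hence a hyperbolic component.

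The heart of the proof, and the main obstacle, is the 2-to-1 count. I expect the factor of $2$ to come from the degree-$2$ return map $g^n \colon \overline{F_2(g)} \to \overline{F_2(g)}$, which has exactly two fixed points: the super-attracting centre $v_2(g)$ in the interior, and a repelling fixed point $\beta_g$ on $\partial F_2(g)$ (unique by the Lefschetz count for a proper degree-$2$ self-map of a disc, and of exact period $n$ under $g$ because the cycle of boundaries $\partial g^i(F_2(g))$ consists of $n$ distinct Jordan curves by the disjointness hypothesis). Reading off the $\mathcal{P}_n(g)$-itineraries of these two $g$-fixed points and applying Lemma \ref{2.2} to $f$ should yield two \emph{distinct} period-$n$ points $z_1(V), z_2(V) \in P^0(f)$ associated with the same component $V$. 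The remaining subtlety is to show that this involution $z_1(V) \leftrightarrow z_2(V)$ on period-$n$ points of $f$ in $P^0(f)$ is well-defined and exhausts every such point, so that no two components are paired together and no point is left over. This is where Theorem 3.2 of \cite{R1} together with a degree comparison in the parameter-space map $h \mapsto h^n(v_2(h)) - v_2(h)$ on $V(P^0;f)$ should be decisive.
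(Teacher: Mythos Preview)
Your proposed source for the $2$-to-$1$ factor does not work. The two fixed points of $g^n$ on $\overline{F_2(g)}$ --- the centre $v_2(g)$ and the boundary repeller $\beta_g$ --- lie in the \emph{same} puzzle piece $P^\ell(g,v_2)$ for every $\ell\ge 0$, because by construction the graph $G_0$ (and hence every $G_\ell$) is disjoint from the closures of periodic Fatou components. Consequently $v_2(g)$ and $\beta_g$ have identical itineraries in $\mathcal{P}_n(g)$, and transporting them to $f$ via $\varphi_{g,f}$ and Lemma~\ref{2.2} yields a single periodic point of $f$, not two. So your map $V\mapsto\{z_1(V),z_2(V)\}$ collapses, and with it the counting argument.

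The paper's mechanism is superficially similar but applied to a different object: it uses the degree-two branched cover $g^n\colon P^n(g)\to P^0(g)$ on the level-$n$ \emph{puzzle piece} $P^n(g)$, which is much larger than $\overline{F_2(g)}$. The two fixed points of this map of a disc are genuine repelling periodic points that persist under perturbation of $g$ within $V(P^0,\dots,P^n;g_0)$ (periodic points avoid $G_\ell\setminus G_0$), so the count of period-$n$ points inside $P^n$ is constant across the parapuzzle piece and can be read off at any convenient parameter. Distinct hyperbolic components sit in distinct level-$n$ parapuzzle pieces, hence are paired with disjoint dynamical pieces and disjoint pairs of periodic points. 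There is a second case you have not addressed at all: when the period-$n$ component lies in a Mandelbrot copy attached to a lower-period component dividing $n$, the return degree on $P^n$ is no longer $2$, and the paper falls back on the classical count ($2^{n/k-1}$ hyperbolic components versus $2^{n/k}$ periodic points) inside the copy.

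Finally, your own acknowledgement that the exhaustion and well-definedness steps are only ``expected'' and ``should be decisive'' means the argument is not complete even on its own terms; but the more fundamental issue is the misidentification of the degree-two map above.
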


\begin{proof} The number of periodic points of period $n$ in $P^0(g)$ is the same for all $g\in V_{k,0}$ which do not have a parabolic point of period $n$ and multiplier $1$. Now $g^{-1}(G_\ell (g))$ varies isotopically for $g\in V(P^0,P^1,\cdots P^\ell;g_0)$ for any fixed $g_0$. So the possible sets $P^{\ell +1}(g)\subset P^\ell (g)$ vary isotopically for all $g\in V(P^0,P^1,\cdots,P^\ell;g_0)$, and the number of periodic points of period $n$ in $P^{\ell +1}(g)$ is the same for all $g\in V(P^0,P^1,\cdots P^\ell ;g_0)$ which do not have a period $n$ point in $P^{\ell +1}$ with multiplier $1$. Any set $\partial P^\ell(g) \setminus \partial P^0(g)$ contains no periodic points, as it is contained in $G_\ell(g)\setminus G_0(g)=g^{-\ell}(G_0(g))\setminus G_0(g)$. Hyperbolic components of period $n$ which are in copies of the Mandelbrot set of lower period correspond to periodic points of period $n$ in sets $\bigcap _{i=0}^\infty P^{ik}(g)$ where $P^{(i+1)k}(g)$ is a component of $g^{-k}(P^{ik}(g))$, for $k$ properly dividing $n$, and $k$ is the largest possible such integer. For such points, the two-to-one correspondence between the number of points of period $n$ and the number of hyperbolic components of period $n$ follows from the well-established fact that there are $2^{n/k-1}$ hyperbolic components of period dividing $n/k$ in the Mandelbrot set -- while the number of points of period dividing $n/k$ for any non-parabolic parameter value is always $2^{n/k}$. So in order to complete the proof, it suffices to prove that if $V(P^0,P^1,\cdots P^n;g_0)$ contains a hyperbolic component of period $n$ which is not in a copy of the Mandelbrot set attached to a hyperbolic component of lower period dividing $n$, then it contains only one, and $P^n(g)$ contains two points of period $n$ for non-parabolic parameter values. We see this as follows. The condition about not being in a copy of the Mandelbrot set ensures that $P^n(g)$ is mapped to $P^0(g)$ by $g^n$ with degree two, and $P^n(g)$ is not contained in $g^i(P^n(g))$ for $0<i<n$, and then the number of points in $P^n(g)$ fixed by $g^n$ is two, because this is the number of fixed points (up to multiplicity) for a  holomorphic degree two map of a disc onto itself, by Rouch\'e's Theorem. There are, in fact, exactly two fixed points, as $g$ does not have parabolic fixed points. 

\end{proof}

Now we want to investigate some ways in which the configuration of hyperbolic components near $f$ is affected by  the topology and dynamics of the sequences of graphs and Markov partitions. We are particularly interested in the case when $f$ is represented by a mating. For the results in Section \ref{3}, we will need a variation on the result of \ref{2.4} which is proved in exactly the same way. So we simply state the result, with a brief note on the proof. Note that the first hypothesis is exactly the same as in \ref{2.4}.

\begin{theorem}\label{2.6} Let $g\in V_{k,0}$ be postcritically finite of type IV sufficiently near $f$, that is $v_2(g)\in P^{j_1m}(g,f)$ for $j_1$ sufficiently large. Let $n_1=j_1m$, $m_1=m$, and let $i_1$ be a fixed integer. Let  $r_i$, $n_i$, be defined for $0\le i\le t$ and let $m_i$, $j_i$ be defined for $1\le i\le t$ satisfying  
$$n_0=r_0=0,$$
$$r_1=i_1m,$$
$$n_1-r_1=(j_1-i_1)m,$$
$$n_2-r_2=(j_2-1)m_2+m_2'$$
for $m_2'=i$ for the greatest $i<m_2$ with  $g^{i}(v_2)\in P^{r_1}(g)$,
$$n_i-r_i\ge n_{i-1}\mbox{ if }i\ge 3,$$
$$r_i<m_i\mbox{  if  }i>1,$$
$$j_i\ge 1,$$
and for $0\le i<t$:
\begin{equation}\label{2.6.1}n_{i+1}=n_i-r_i+j_{i+1}m_{i+1} ,\end{equation}
\begin{equation}\label{2.6.2}n_i-r_i+m_{i+1}>n_i.\end{equation}

Suppose also that rational maps $f_i$  have been defined with
$$f_i\in V(P^0,\cdots P^{n_i},g),$$
$v_2(f_i)$ is of period $m_i$,
$$f_{i+1}\in V(P^0,\cdots P^{n_i},f_i).$$
Let $S_{m_i,h}$ be the local inverse of $h^{m_i}$ mapping $h^{m_i}(v_2(h))$ to $v_2(h)$, on some appropriate domain.

Suppose also that, for $1\le i\le t$, the set $P^{n_i-r_i+,1}(h)$ is  defined for $h\in V(P^0,\cdots P^{n_i-r_i},g)$, is a  topological disc and a union of sets of ${\mathcal{P}}_{n_i-r_i+m_2-m_2'}(h)$, with
$$P^{n_1-r_1,1}(h)=P^{n_1-r_1}(h),$$
and for $i\ge 1$ and $0\le j<j_{i+1}$,
\begin{equation}\label{2.6.6}P^{n_i-r_i,1}(h)\subset P^{n_i-r_i}(g),\end{equation}
and for $i\ge 3$,

\begin{equation}\label{2.6.5}g^j(v_2(g))\notin P^{n_i-r_i,1}(g),\ j\le j_{i}m_i,\ j\ne m_{i}.\end{equation}

Here $j_{i+1}$ is the greatest integer such that  
$$v_2(g)\in S_{m_{i+1},g}^{j_{i+1}}(P^{n_{i}-r_{i},1}(g))=P^{n_{i+1},1}(g)$$
and $m_{i+1}$ is the least integer $>m_i$ with 
$$g^{m_{i+1}}(v_2(g))\in P^{n_i-r_i,1}(g)$$

Then  we can define $m_{t+1}$ , $n_{t+1}$  similarly to the above with $j_tm_t<m_{t+1}$, and can find $f_{t+1}$ with corresponding properties to the $f_i$ for $i\le t$.\end{theorem}

Note that $m_i$ is an increasing sequence by the condition $m_{i+1}>j_im_i$, and so is $n_i$, by (\ref{2.6.1}) and (\ref{2.6.2}). There is work to do to establish the hypotheses, which will be done in the later sections, but here is the proof of \ref{2.6}, subject, of course, to the hypotheses.
\begin{proof}  
Our condition (\ref{2.6.5}) and the definition of $j_i$ ensure that $S_{m_{i+1},g}$ is two-valued on $P^{n_i-r_i,1}(g)$ for $i\le t$, because, by (\ref{2.6.5}), if $g^rS_{m_{i+1},g}$ is not two-valued for some $1\le r<m_{i+1}$ then we must have $m_{i+1}-r=m_i$ and $g^rS_{m_{i+1},g}=S_{m_i,g}$ and hence 
$$g^r(S_{m_{i+1},g}(P^{n_i-r_i,1}(g)))\subset P^{n_i-r_i,1}(g).$$
But this implies that $g^r(v_2(g))\in P^{n_i-r_i,1}(g)$, which, by (\ref{2.6.5}), implies that $r=m_i$ and hence 
$$S_{m_{i+1},g}=S_{m_i,g}^2.$$
But then by (\ref{2.6.6}),
$$v_2(g)\in S_{m_i,g}^2(P^{n_i-r_i,1}(g))\subset S_{m_i,g}(P^{n_i,1}(g)),$$
contradicting the definition of $j_i$. So $S_{m_{i+1},g}$ is indeed two-valued on $P^{n_i-r_i,1}(g)$ and hence also $S_{m_{i+1},h}$ is two-valued  for all $h\in V(P^0,\cdots P^{n_i-r_i,1},g)$. So  exactly as in \ref{2.4}  there is a unique $h\in V(P^0,\cdots P^{n_i-r_i},g)$ with 
 $$v_2(h)\in \bigcap _{j\ge 0}S_{m_{i+1},h}^jP^{n_{i}-r_{i}}(h)$$
 and $h^{m_{i+1}}(v_2(h))=v_2(h)$. \end{proof}

\section{Quadratic Polynomials, Laminations, Matings, and the Main Theorem about Matings}\label{3}
\subsection{}\label{3.1}

Quadratic polynomials  do not fit into the framework used in Section \ref{2}, unless there is a single Fatou component, because the Julia set is the boundary of a single Fatou component, the one containing infinity, and therefore this Fatou component has boundary in common with any other Fatou component. But, of course, it is easy to construct Markov partitions for the Julia set of a quadratic polynomial in the Mandelbrot set. For example,  for a polynomial which is not in the main cardioid, one can simply take the zero-level Yoccoz partition, using the rays landing at the $\alpha $ fixed point. 

We recall the combinatorial description of hyperbolic polynomials in the Mandelbrot set, originally realised by Douady and Hubbard \cite{D-H1} and reinterpreted by Thurston \cite{T}. The Mandelbrot set is the set of quadratic polynomials $f_c(z)=z^2+c$ ($c\in \mathbb C$) with connected Julia sets, or equivalently, the orbit of the critical point $0$ remains bounded, or, equivalently, the unbounded Fatou component $U_\infty (c)$, including $\infty $, is conformally equivalent to the open unit disc $D=\{ z:|z|<1\} $. Under any of these conditions, there is a holomorphic bijection 
$$\varphi _c:\{z\in \mathbb C:|z|>1\} \to U_\infty (c)$$
satisfying 
\begin{equation}\label{3.1.1}\varphi _c(z^2)=f_c\circ \varphi _c(z),\ \ |z|>1.\end{equation}
If $f_c$ is hyperbolic, and more generally, if the Julia set  $J(f_c)$ is locally connected, then the map $\varphi _c$ extends to map the unit circle $S^1=\{ z\in \mathbb C:|z|=1\} $ to the Julia set $\partial U_\infty (c)$, with (\ref{3.1.1}) holding for $|z|=1$ also. Then we define $\mbox{conv}(\varphi _c^{-1}(z))$ to be the convex hull in $\overline{D}$  of $z\in J(f_c)$. It does not greatly matter which metric is used on the unit disc to define the convex hull, provided that geodesics in the metric exist and intersect in at most one point. Usually either the Euclidean or hyperbolic metric  is used.  Thurston \cite{T} made the simple but crucial observation that convex hulls are either equal or disjoint.  The set $\varphi _c^{-1}(z)\subset S^1$ is always finite, and so the convex hull is either a point  a finite-sided polygon, and $\partial \varphi _c^{-1}(z)$ is either a single point or a finite union of geodesics, with endpoints. The set 
$$L_c=\{ \ell :\ell \subset \partial \varphi _c^{-1}(z)\mbox{ is a geodesic, with endpoints }\} $$
is a {\em{clean quadratic invariant lamination}}, of which the {\em{leaves}} are the geodesics with endpoints, $\ell $, as above. The invariance properties are given in terms of the endpoints of leaves.

\begin{itemize}
\item {\em{Forward invariance.}} If $\ell \in L_c$ is a leaf with endpoints $z_1$, $z_2$, then either $z_2=-z_1$ or there is a leaf in $L_c$ with endpoints $z_1^2$ and $z_2^2$, which is called $\ell ^2$
\item {\em{Backward invariance.}} If $\ell \in L_c$ is a leaf with endpoints $z_1$ and $z_2$, then there are two leaves of $L_c$, one with endpoints $z_3$ and $z_4$, and the other with endpoints $-z_3$ and $-z_4$, such that $z_3^2=z_1$ and $z_4^2=z_2$.
\item {\em{Clean.}} If two leaves of $L_c$ have a common endpoint, then the two leaves are both in the boundary of the same component of $\overline{D} \setminus \cup L_c$, called a {\em{gap}} of $L_c$.
\end{itemize}

The intersection of the lamination with $D$ is closed in  $D$. The {\em{length}} of a leaf is defined in terms of its endpoints $e^{2\pi ia}$ and $e^{2\pi ib}$ with $0\le a<b<1$, as $\mbox{min}(b-a,1-(b-a))$, so that the length is always $\le \frac{1}{2}$. If $f_c$ and $f_{c'}$ are in the same hyperbolic component then $L_c=L_{c'}$. If $f_c$ is hyperbolic, then the longest leaf has length $<\frac{1}{2}$ and is periodic under the map $\ell \mapsto  \ell ^2$. The endpoints of $\ell $ are, of course, also periodic under the map $z\mapsto z^2$, of the same period as $\ell $, or twice the period of $\ell $. The period of the endpoints is the same as the period of the bounded Fatou components of $f_c$. If $f_c$ is hyperbolic, and in some other cases, one can use the lamination $L_c$ to reconstruct $f_c$ up to topological conjugacy on $\overline{\mathbb C}$. We can extend $\varphi _c$ to map $\overline{\mathbb C}$ to $\overline{\mathbb C}$ by mapping leaves  and finite-sided gaps to the same points as their endpoints are mapped to, and mapping each infinite-sided gap $G$ to the Fatou component whose boundary is the image under $\varphi _c$ of the boundary of $G$. There is such a Fatou component, and it is unique. We can define a branched covering $s_{L_c}:\overline{\mathbb C}\to \overline{\mathbb C}$ to be $z^2$ on $\{z:|z|\ge 1\} $, and to map leaves of $L_c$ to leaves, finite-sided gaps homeomorphically  to finite-sided gaps, and infinite-sided gaps either homeomorphically or by degree-two branched coverings to infinite-sided gaps. In addition we can ensure that on  any periodic cycle of infinite-sided gaps, $\varphi _c$ is a conjugacy between $s_{L_c}$ and $f_c$. In order to define $s_{L_c}$  uniquely up to topological conjugacy, we make this conjugacy on periodic cycles of infinite-sided gaps only  when $f_c$ is postcritically finite, and then define $s_{L_c}$ near critical forward orbits so as to be locally conjugate to $f_c$. So in such cases, $s_{L_c}$ is a postcritically finite branched covering. If we use the extension  $\varphi _c:\overline{\mathbb C}\to \overline{\mathbb C}$, then $\varphi _c\circ s_{L_c}=f_c\circ \varphi _c$ on $\overline{\mathbb C}$. Since $\varphi _c$ maps leaves of $L_c$ to points, it is a semi-conjugacy, unless $L_c$ is empty, that is, unless $c=0$. 

Amazingly, as noted by Douady-Hubbard and Thurston (the language used here is that of Thurston), this description can be completely reversed. A clean quadratic invariant lamination $L$ is uniquely determined by its {\em{minor leaf}}, the image of its one or two  longest leaves -- two of them unless the longest leaf has length $\frac{1}{2}$. For simplicity, we assume that the longest leaf of $L$ has length $<\frac{1}{2}$ and is a side of an infinite-sided gap. Then this longest leaf is periodic, and $L=L_c$ for some hyperbolic critically periodic polynomial $f_c$ with $c\ne 0$. This means that $0$ has some period $>1$ under $f_c$. The period $k$ of the endpoints of the minor leaf of $L_c$ is the same as the period $k$ of $0$ under $f_c$. The endpoints are therefore of the form $e^{2\pi ix_j}$, $j=1$, $2$, where $x_j$ is of period $k$ under the map $x\mapsto 2x\mbox{ mod }1$, that is, $x_j$ is a rational of odd denominator, where the denominator is a divisor of $2^k-1$ but not of $2^{k_1}-1$ for any $k_1<k$. Each such $x_j$ is the endpoint of precisely one minor leaf of a clean invariant lamination. Thus, such minor leaves are in one-to-one correspondence with hyperbolic components of quadratic polynomials, in the Mandelbrot set, but excluding the main cardioid (which is the hyperbolic component of $f_0(z)=z^2$). Moreover such minor leaves are disjoint, and the closure of the set of such minor leaves is a lamination of minor leaves called $QML$. Conjecturally, the quotient of $\overline{D}$ by the associated equivalence relation, in which the equivalence classes are either leaves of $QML$, or closures of finite-sided gaps of $QML$, is homeomorphic to the Mandelbrot set.

We often write $L_{x}$ for the lamination with minor leaf with endpoint at $x$, and $s_x$ for $s_{L_x}$. Thus, $L_{x_1}=L_{x_2}$ if $e^{2\pi ix_1}$ and $e^{2\pi ix_2}$ are endpoints of the same minor leaf, and $s_{x_1}=s_{x_2}$. The minor leaf is denoted by $\mu _{x_1}$ or $\mu _{x_2}$, as, of course, $\mu _{x_1}=\mu _{x_2}$.
Minor leaves $\mu _1$ and $\mu _2$ are partially ordered by: $\mu _1<\mu_2$ if $\mu _1$ separates $\mu _2$ from $0$ in the closed unit disc. Every minor leaf is either minimal in this ordering or is separated from $0$ by a minimal minor leaf. The minimal minor leaves are the minors of the hyperbolic components adjacent to the main cardioid of the Mandelbrot set.  The set of minor leaves bounded from $0$ by a fixed minimal minor leaf is called a {\em{combinatorial limb}}. 

As well as polynomials, laminations $L$ and lamination maps $s_L$ can be used to define some rational maps, up to topological conjugacy. In fact, for sufficiently general laminations, all hyperbolic quadratic rational maps can be described in this way \cite{R6,R7}, but here we concentrate on rational maps which are represented by {\em{matings}}. If $p$ and $q$ are any odd denominator rationals then we can define a postcritically finite branched covering  $s_p\Amalg s_q$ by
$$s_p\Amalg s_q(z)=\begin{array}{ll}s_p(z)&\mbox{ if }|z|\le 1,\\
(s_q(z^{-1}))^{-1}&\mbox{ if }|z|\ge 1.\end{array}$$
This definition is consistent, since $s_p(z)=s_q(z)=z^2$ if $|z|=1$, and $(z^{-1})^2=(z^2)^{-1}$. We also define 
$$L_q^{-1}=\{ \ell ^{-1}:\ell \in L_q\} $$
where $\ell ^{-1}=\{ z^{-1}:z\in \ell \} $. We have the following theorem. This is a combination of  two theorems. The first is Tan Lei's theorem \cite{TL} on a necessary and sufficient condition for $s_p\Amalg s_q$ to be Thurston-equivalent to a rational map (proved more generally in \cite{R6, R7}), which is itself verification that there is no Thurston obstruction, and hence a deduction from Thurston's theorem for Post-critically-finite Branched Coverings \cite{D-H2,R3}. The second theorem is a deduction from the results of  Chapter 4 of \cite{R6} which show that Thurston equivalence to a rational map $f$ implies semi-conjugacy, that is, there is a continuous surjection $\varphi $ such that $\varphi \circ (s_p\Amalg s_q) =f\circ \varphi $,  and any set $\varphi ^{-1}(x)$ is either a single point, or a connected union of finitely many leaves and finite-sided gaps of $L_p\cup L_q^{-1}$. 

{\em{Thurston equivalence}} is the appropriate definition of homotopy equivalence for {\em{post-critically-finite}} branched coverings.  The {\em{postcritical set}} $X(f)$ of a branched covering $f$ is defined by 
$$X(f)=\{ f^n(c):n>0,\ \ c\mbox{ critical }\}  .$$
The branched covering $f$ is {\em{post-critically-finite}} if the set $X(f)$ is finite. Two post-critically-finite branched coverings $f_0$ and $f_1$ are {\em{Thurston equivalent}}, written $f_0\simeq f_1$, if there is a homotopy $f_t$ through post-critically-finite branched coverings from $f_0$ to $f_1$ such that $X(f_t)$ varies isotopically from $f_0$ to $f_1$. Equivalently, there is a homeomorphism $\varphi $ and a homotopy $g_t$ ($t\in [0,1]$) through post-critically-finite branched coverings such that $X(g_t)$ is the same set for $t\in [0,1]$, and $\varphi \circ f_0\circ \varphi ^{-1}=g_0$, and $g_1=f_1$. Also equivalently, there are homeomorphisms $\varphi $ and $\psi $ such that  $\varphi \circ f_0\circ \psi ^{-1}=f_1$ and $\varphi (X(f_0))=\psi (X(f_0))=X(f_1)$, and $\varphi $ and $\psi $ are isotopic via an isotopy which is constant on $X(f_0)$. 

\begin{theorem}\label{3.2} Let $p$ and $q$ be odd denominator rationals. Let $\sim _{p,q}$ be the equivalence relation generated by two points being equivalent if and only if they are in the same leaf of $L_p\cup L_q^{-1}$, or in the closure of the same finite-sided gap of $L_p\cup L_q^{-1}$. Let $[s_p\Amalg s_q]$ denote the map on the quotient space $\overline{\mathbb C}/\sim _{p,q}$. Then the following are equivalent.
\begin{itemize}
\item[1.] $\mu _p$ and $\mu _q$ are not in conjugate combinatorial limbs.
\item[2.] $s_p\Amalg s_q$ is Thurston equivalent to a post-critically-finite hyperbolic quadratic rational map.
\item[3.] $\overline{\mathbb C}/\sim _{p,q}$ is homeomorphic to $\overline{\mathbb C}$, each equivalence class of $\sim _{p,q}$ is a point , or a finite union  of at most $N$ leaves and closures of finite-sided gaps, for some $N$ depending on $(p,q)$, and  $[s_p\Amalg s_q]$ is topologically conjugate to a post-critically-finite hyperbolic rational map.
\end{itemize}
\end{theorem}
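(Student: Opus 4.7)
The plan is to prove the three-way equivalence by combining the two theorems the preamble identifies: Tan Lei's matability criterion \cite{TL} (in the generality of \cite{R6,R7}) gives $1 \Leftrightarrow 2$, and the semi-conjugacy results of Chapter 4 of \cite{R6} give $2 \Leftrightarrow 3$. The direction $3 \Rightarrow 2$ is tautological, since a topological conjugacy on the quotient that respects the postcritical set is in particular a Thurston equivalence (the equivalence classes of $\sim_{p,q}$ are collapsible rel the postcritical set), so the real content lies in $1 \Leftrightarrow 2$ and $2 \Rightarrow 3$.

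For $1 \Leftrightarrow 2$, I would show that the only possible Thurston obstruction for the degree-two post-critically-finite branched cover $s_p \Amalg s_q$ is a Levy cycle, and that such a cycle exists precisely when $\mu_p$ and $\mu_q$ lie in conjugate combinatorial limbs. In the conjugate-limb case, the combinatorial matching of periodic cycles of leaves in $L_p$ and $L_q^{-1}$ along the equator $S^1$ produces a periodic simple closed curve on $\overline{\mathbb{C}}$ separating the two critical orbits that is isotopic rel postcritical set to a component of its own preimage; this is a Levy cycle, obstructing Thurston equivalence to a rational map. In the non-conjugate case no such matching is possible, and since Levy cycles are the only Thurston obstructions for degree-two hyperbolic branched coverings, Thurston's theorem \cite{D-H2,R3} produces a post-critically-finite hyperbolic quadratic rational map $f$ with $s_p \Amalg s_q \simeq f$.

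For $2 \Rightarrow 3$, I would invoke the semi-conjugacy construction of Chapter 4 of \cite{R6}: Thurston equivalence to a rational map $f$ yields a continuous surjection $\varphi : \overline{\mathbb{C}} \to \overline{\mathbb{C}}$ with $\varphi \circ (s_p \Amalg s_q) = f \circ \varphi$, whose non-trivial fibres $\varphi^{-1}(x)$ are connected unions of leaves of $L_p \cup L_q^{-1}$ and closures of finite-sided gaps. These fibres coincide with the equivalence classes of $\sim_{p,q}$, so $\varphi$ factors through a homeomorphism $\overline{\mathbb{C}}/{\sim_{p,q}} \to \overline{\mathbb{C}}$ conjugating $[s_p \Amalg s_q]$ to $f$; sphericity of the quotient follows from Moore's theorem once one knows the equivalence classes are cellular. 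The main obstacle I expect is establishing the uniform bound $N$ on the size of the equivalence classes: this requires combining expansion of $f$ on its Julia set with the finiteness and eventual periodicity of the postcritical orbits of $s_p \Amalg s_q$, so as to bound the number of leaves and gaps that can be identified in a single fibre. These expansion estimates are the technical heart of the argument and are precisely what is worked out in Chapter 4 of \cite{R6}.
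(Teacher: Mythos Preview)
Your proposal is correct and matches the paper's approach exactly: the paper does not give a proof of this theorem but states, in the paragraph immediately preceding it, that it is a combination of Tan Lei's matability criterion \cite{TL} (equivalently \cite{R6,R7}) for $1\Leftrightarrow 2$ via Thurston's theorem, and the semi-conjugacy results of Chapter~4 of \cite{R6} for $2\Rightarrow 3$. Your sketch simply unpacks those citations, and the direction $3\Rightarrow 2$ is indeed trivial as you note.
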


The statement of the following result has  nothing to do with the theory above, but quadratic invariant laminations play an important role in the proof, as we shall see. 

\begin{proposition}\label{3.3}  Let $f$ be quadratic rational hyperbolic type IV with periodic Fatou components $U_1$ and $U_2$ of periods $k_1$ and $k_2$ under $f$, containing the critical values of $f$. Then one of the following occurs.

\begin{itemize}
\item[1.] $k_i=1$. 
\item[2.] The closure of any Fatou component in the full orbit of $U_i$ is a closed topological disc. The closures  of any two Fatou components in the full orbit of $U_i$ intersect in at most one point, and any such point is in the full orbit of the unique point in $\partial U_i$ which is fixed by $f^{k_i}$.
\end{itemize}
 Also one of the following occurs.
\begin{itemize}
\item[3.] Either $k_1=1$ or $k_2=1$. 
\item[4.] The closures of any two Fatou components intersect in at most one point, and if $x$ is such a point, there are $n\ge 0$ and $i\in \{ 1,2\} $ such that $f^n(x)$ is the unique point in $\partial U_i$ fixed by $f^{k_i}$. \end{itemize}
\end{proposition}

\begin{proof}  Let $\varphi _{i,j}:S^1\to f^j(\partial U_i)$ be the unique orientation-preserving continuous map, extending to map the open unit disc $D$ homeomorphically 
 to $f^j(U_i)$, with $\varphi _{i,j}(z^2)=f^{k_i}\circ \varphi _{i,0}(z)$ for all $z\in S^1$. Write $\varphi _{i,0}=\varphi _i$. Thus, $\varphi _{i,j}=f^j\circ \varphi _i$ on $S^1$, for $0\le j<k_i$. We also write $s(z)=z^2$.
 
 If $W_1$ is a Fatou component,  then mapping forward under $f$, we have $f^n(W_1)=U_i$ for some $n\ge 0$ and for $i=1$ or $2$. If $\overline{W_1}$ is not a closed topological disc, then $f^\ell (\overline{W_1})$ is not a closed topological disc, for all $\ell \ge 0$. If $W_2$ is another Fatou component, such that  $\partial W_1$ and $\partial W_2$ intersect in $x$, and  $f^j(W_1)\ne f^j(W_2)$ for some $0\le j<n$ but $f^{j+1}(W_1)=f^{j+1}(W_2)$, then $f^j(\partial W_1)$ and $f^j(\partial W_2)$ must intersect in at least two points and $f^{\ell }(\overline{W_1})$ is not a closed topological disc for $j+1\le \ell \le n$.  

 So we will show that, if $\overline{U_i}$ is not a topological disc, or $U_i$ shares at least two  boundary points with some other Fatou component in its backward orbit, then $k_i=1$, and if $U_i$ shares  at least two boundary points with some other Fatou component, then at least one of $k_1$ and $k_2=1$.
 
 The idea of the proof is as follows.  We will construct a quadratic forward invariant lamination $L_i$ on $\overline{D}$  which is nonempty if and only if  either \\ $\varphi _i:S^1\to \partial U_i$ is not injective, or $U_i$ has at least two boundary points in common with another Fatou component. Also, $L_i$ does not have a diagonal leaf. Therefore, if $L_i$ is nonempty, it has at least one periodic leaf. Then we can construct a  critically periodic {\em{tuning}} $f_1$ of $f$, which is a nontrivial tuning of the forward orbit of $v_i(f)$ if $L_i\ne \emptyset $,  which preserves $\Lambda $, where $\Lambda $ is the union over $i$ and $j$ of images under the maps  $\varphi _{i,j}$ of some finite set of closures of periodic leaves of $L_i$, and $\Lambda $ is also a union of closed loops.  Then $\Lambda $ contains a Levy cycle for $f_1$. Then the Tuning Proposition 1.20 of \cite{R6} implies that $k_i=1$  if $f_1$ is a nontrivial tuning round the forward orbit of $v_i$, that is, if $L_i\ne \emptyset$. This is because the Tuning Proposition says that a nontrivial tuning of a type IV postcritically finite rational map does not have Levy cycles, except sometimes if the periodic cycle being tuned has period $1$. The Tuning Proposition needs to be applied twice, to each of the postcritical orbits of $f$ in succession, if $f_1$ is a nontrivial tuning round both the postcritical orbits of $f$. For a set $A\subset S^1$, we write $C(A)$ for the convex hull in $\overline{D}$ (in the Euclidean metric, for the sake of concreteness). 

First  suppose that $\overline{U_i}$ is not a closed topological disc. Then we define $L_i$ to be the set of leaves $\ell $ such that $\ell $ is a component of $D\cap C(\varphi _i^{-1}(x))$ for some $x\in \partial U_i$ with $\#(\varphi _i^{-1}(x))\ge 2$. The  sets $C(\varphi _i^{-1}(x))$, for different $x$, are disjoint. This is a familiar argument from \cite{T}, deriving from the fact that two smooth loops on the sphere which intersect transversally can never have a single intersection. So the leaves of $L_i$ have disjoint interiors. $L_i$ is closed (by continuity of $\varphi _i$) and clean, by definition. Also $L_i$ does not contain a diagonal leaf, because if $\varphi _i^{-1}(x)$ contains a diagonal leaf, $x$ must be a critical point of $f^{k_i}$. So 
$$s(C(\varphi _i^{-1}(x))\cap S^1)=C(\varphi _i^{-1}(f^{k_i}(x)))\cap S^1,$$
and $L_i$ is forward invariant. The closure of every leaf in $L_i$ is a closed loop.  Let $L_{i,b}$ be the unique quadratic invariant lamination such that the forward orbit of every leaf in $L_{i,b}$ intersects $L_i$ and at least one of the longest leaves of $L_{i,b}$ is in $L_i$. Since $L_i$ is closed, $L_{i,b}$ is also closed. Then $L_{i,b}$ either has a fixed leaf with endpoints of period $2$ or a fixed $r$-sided gap with sides of period $r$ for some finite $r$.  These periodic leaves are also in $L_i$, and therefore $L_i$ has at least one periodic leaf $\ell $. Let $f_1$ be a critically periodic  tuning of $f$ round the orbit of $v_i(f)$ which preserves up to isotopy the set 
$$\Lambda =\bigcup _{j=0}^{k_i-1}\bigcup _{t\ge 0}\varphi _{i,j}(s^t(\overline{\ell })).$$
Then $\Lambda $ contains a Levy cycle for $f_1$, giving the required contradiction. So $k_i=1$.

Now suppose that $\overline{U_i}$ is a closed topological disc, and $U_i$ shares at least two boundary points with $U_3$, where $U_3$ is in the full orbit of $U_i$. Then $f^n(U_i)\ne f^n(U_3)$ for all $n\ge 0$, because otherwise $f^n(\overline{U_i})$ is not a closed topological disc, for any $n$ such that $f^n(U_i)=f^n(U_3)$. Then we define $L_i$ to be the set of all leaves such that $\ell $ is a component of  $D\cap \partial C(\varphi _i^{-1}(\partial U_i\cap \partial U_3))$ for some such  Fatou component $U_3$. Once again, the  sets $C(\varphi _i^{-1}(\partial U_i\cap \partial U_3))$, for varying $U_3$, have disjoint interiors, by the same argument from \cite{T}. Also, $L_i$ is closed in $D$,  since only finitely many Fatou components have spherical diameter $>\varepsilon $, for any $\varepsilon >0$. There is no diagonal leaf, because that would imply $\overline{U_i}$ was not a topological disc. Forward invariance of $L_i$ follows from 
$$s(C(\varphi _i^{-1}(\partial U_i\cap \partial U_3)))\cap S^1=C(\varphi _i^{-1}(\partial U_i\cap \partial f^{k_i}(U_3)))\cap S^1.$$
So once again $L_i$ contains a periodic leaf $\ell $. This time $\varphi _i(\ell )$ shares endpoints with $\varphi _{i,j}(\ell ')$ for some $\ell '\in L_i$ of the  same period, and $\varphi _i(\overline{\ell} )\cup\varphi _{i,j}(\overline{\ell '})$ is a closed loop. As before we can make a tuning  $f_1$ of $f$ round the forward orbit of $v_1(f)$, which admits a Levy cycle, leading to a contradiction unless $k_i=1$.

Finally suppose that $\overline{U_i}$ is a closed topological disc for both $i=1$ and $i=2$,  and that $U_1$ has at least two boundary points in common with $f^j(U_2)$ for some $j\ge 0$. Then, in the same way as above, we can construct nonempty forward invariant laminations $L_i$ of leaves $\ell $ such that $\ell $ is a component of  $D\cap \partial C(\varphi _i^{-1}(\partial U_i\cap \partial U_3))$ for some Fatou component $U_3$ not in the full orbit of $U_i$. Then once again we can construct a tuning $f_1$ of $f$, this time round both the postcritical orbits of $f$, and preserving a Levy cycle where each loop is of the form $\varphi _{1,j_1}(\overline{\ell _1})\cup \varphi _{2,j_2}(\overline{\ell _2})$ for some $\ell _1\in L_1$ and $\ell _2\in L_2$. Then the Tuning Proposition gives $k_1=1$ or $k_2=1$. 
 
Now we consider the the extra properties which hold in the alternatives 2  and 4 of the proposition.

So suppose that $k_i>1$.  If  $U_i$ has a boundary point fixed by $f^{k_i}$, which is $\varphi _i(z)$ for some $z\ne 1$,  then  $\varphi _i(z)=f^{k_i}(\varphi _i(z))=\varphi _i(z^2)$ and $\overline{U_i}$ is  not  a closed topological disc.  So if $U_i$ has a boundary point $\varphi _i(z)$ in common with $f^j(U_i)$ for some $0<j<k_i$, with $z\ne 1$  and $\overline{U_i}$ is a closed topological disc, then   $U_i$ and $f^j(U_i)$ have the distinct boundary points $\varphi _i(z)$ and $\varphi _i(z^2)$ in common. We have seen that this does not happen. Therefore $z=1$. This completes the proof of the dichotomy 1 or 2.

Now we consider the dichotomy of 3 or 4 of the lemma. So suppose $k_1>1$ and $k_2>1$.  If $U_1$ and $f^j(U_2)$ have a boundary point $x=\varphi _1(z_1)=\varphi _{2,j}(z_2)$ in common, with $z_1\ne 1$ and $z_2\ne 1$, then since  both $\overline{U_1}$ and $f^j(\overline{U_2})$ are closed topological discs, the period of $x$ under $f$ is a proper multiple of both $k_1$ and $k_2$: $\ell _1k_1=\ell _2k_2$ where $\ell _i$ is the period of $z_i$ under $s$. Then $U_1$ has boundary point $\varphi _1(s^t(z_1))$ in common with $f^{j+k_1t}(U_2)$ for $0\le t<\ell _1$. If there is $t<\ell _1$  such that $k_2$ divides $k_1t$, then  $\varphi _1(s^t(z_1))$ is a second common boundary point of $U_1$ and $f^j(U_2)$, which we have already discounted. So now we assume that $k_1\ell _1=k_2\ell _2$ is the least common multiple of $k_1$ and $k_2$ and that $\overline{U_i}$ is a closed topological disc for both $i=1$ and $i=2$, and any two sets $\partial U_1$ and $f^j(\partial U_2)$ intersect in at most one point. This means that if $d$ is the greatest common divisor of $k_1$ and $k_2$, then $k_1=d\ell _2$ and $k_2=d\ell _1$, and the greatest common divisor of $\ell _1$ and $\ell _2$  is $1$. Now if $f^{j_1}(U_1)$ shares a boundary point with $f^{j_2}(U_2)$, then $f^{j_1}(U_1)$ shares a boundary point with $f^{j_3}(U_2)$ if and only if $d$ divides $j_3-j_2$. It follows that if 
 $$A_j=\{ \ell :f^j(\partial U_1)\cap f^\ell (\partial U_2) \ne \emptyset\} $$
 then any two sets $A_{j_0}$ and $A_{j_1}$ are either equal or disjoint - and of course any such set has $\ell _1$ elements, and similar statements hold with $U_1$ and $U_2$ interchanged. We cannot then have both $\ell _1\ge 3$ and $\ell _2\ge 3$, because the connected union of three of the sets $f^{j}(\overline{U_1})$ and two of the sets $f^\ell (\overline{U_2})$ has three complementary components, and in whichever one another set $f^n(U_2)$ is contained, it is separated from one of the sets $f^j(\overline{U_1})$. So now we assume without loss of generality that $\ell _2=2$ and $\ell _1\ge 2$. Since $\ell _1$ and $\ell _2$ are coprime, $\ell _1$ must be odd and $\ge 3$. We have $k_1=2d$ and $A_0=A_d=\{ j+dt:0\le t<\ell _1\} $ for some fixed $j$ with $0\le j<\ell _1$. The cyclic order  on $\partial U_1$ of the points $f^{j+dt}(\partial U_2)\cap \partial U_1$, for $0\le t<\ell _1$, is the reverse of the  cyclic order on $f^d(\partial U_1)$ of the points $f^{j+dt}(\partial U_2)\cap f^d(\partial U_1)$. So if the points on $\partial U_1$ in anticlockwise order  are $f^{j+d\sigma(t)}(\partial U_2)\cap \partial U_1$ for $0\le t< \ell _1$ then  the  points on $f^d(\partial U_1)$ in  anticlockwise order are $f^{j+d\sigma (\ell _1-1-t)}(\partial U_2)\cap f^d(\partial U_1)$. Now $f^d:\partial U_1\to f^d(\partial U_1)$ is a homeomorphism which preserves anticlockwise order. So there must be $r$ such that
$$\sigma (t)+1=\sigma (r-t)$$
where integers are interpreted $\mbox{ mod }\ell _1$. Now $\ell _1$ is odd, so for any $r$ the transformation $t\mapsto r-t\mbox{ mod }\ell _1$ has a unique fixed point. It follows that the conjugate transformation $t\mapsto t+1\mbox{ mod }\ell _1$ also has a fixed point, which is absurd. So it is not possible to have both $\ell _1>1$ and $\ell _2>1$. This completes the proof of the dichotomy 3 or 4 of the lemma.

\end{proof}

\subsection{Examples of equivalence classes of different sizes}\label{3.4}

We now consider examples of matings $s_p\Amalg s_q$ for which $\mu _p$ and $\mu _q$ are not in conjugate combinatorial limbs, so that there is a rational map $f$ which is topologically conjugate to $[s_p\Amalg s_q]$. We want to concentrate on examples for which the closures of Fatou components of $f$ are all disjoint. In particular, this means that we want the endpoints of $\mu _p$ and $\mu _q^{-1}$ to have disjoint orbits, and we want the $\sim _{p,q}$-equivalence class of $\mu _p$  not to include the closure of any finite-sided gap of $L_p\cup L_q^{-1}$, and similarly for $\mu _q$.  Nevertheless, we are interested in finding examples where the $\sim _{p,q}$-equivalence class of $\mu _p$ (or $\mu _q^{-1}$) has various different sizes.

\noindent{\em{Example 1.}} Let $p=\frac{3}{7}$ and $q=\frac{3}{31}$. The minimal leaves $\mu _{1/3}$  and $\mu _{1/15}$ satisfy $\mu _{1/3}<\mu _{3/7}$ and $\mu _{1/15}<\mu _{3/31}$. By the Centrally Enlarging Lemma II.5.1 in \cite{T}, any periodic leaf in $L_{3/7}$ cannot intersect the set 
	$$\{ e^{2\pi ix}:x\in (-\textstyle{\frac{1}{7},\frac{1}{7})\cup (\frac{3}{7},\frac{4}{7}})\} $$ 
Since $\mu _{4/31}=\mu _{3/31}$ and  $(-\frac{4}{31},-\frac{3}{31})\subset (-\frac{1}{7},\frac{1}{7})$, it follows that $(\mu _{3/31})^{-1}$ is a whole equivalence class for $\sim _{3/7,3/31}$, and that the same is true for  any periodic leaf of $L_{3/31}^{-1}$ with endpoints $e^{2\pi ix_j}$ for $j=1$, $2$, and with $x_j\in (-\frac{1}{7},\frac{1}{7})$ for $j=1$, $2$. Let $\Delta $ denote the closure of the gap of $L_{3/31}$ with vertices at $e^{2\pi ix_j}$ for $x_j=2^j/15$  for $0\le j\le 3$. Then  any leaf in $L_{3/31}^{-1}$ is either  a whole equivalence class for $\sim _{3/7,3/31}$ or in the backward orbit of $\Delta ^{-1}$, because the forward orbit of any leaf in $L_{3/31}$ must intersect the set $\{ e^{2\pi ix}:x\in [\frac{1}{15},\frac{3}{31}]\cup [\frac{4}{31},\frac{2}{15}]\} $, because the image of the longest leaf in the closure of the forward orbit must be in this set, again by II.5.1 of \cite{T}. This means that there are no common endpoints between any leaves of $L_{3/7}$ and of $L_{3/31}^{-1}$. There are no finite-sided gaps for $L_{3/7}$. So every nontrivial equivalence class for $\sim _{3/7,3/31}$ is either a single leaf of $L_{3/7}$ or of $L_{3/31}^{-1}$ or in the backward orbit of $\Delta ^{-1}$. 

It follows that for the rational map $f$  (whose existence is given by Theorem \ref{3.2}) which is topologically conjugate to $[s_{3/7}\Amalg s_{3/31}]$, the closures of Fatou components are disjoint.  Let $v_1(f)$ and $v_2(f)$ be the critical points of $f$ which are the images of $\infty $ and $0$ respectively under the conjugacy $\varphi $  with $\varphi \circ [s_{3/7}\Amalg s_{3/31}]=f\circ \varphi $. We have $f\in V_{5,0}$. Theorem \ref{3.5} below can be applied to this $f$.

\medskip \medskip

\noindent{\em{Example 2}} Example 1 can be generalised.Once again let $p=\frac{3}{7}$ and let $q$ be any odd denominator  rational in $(-1/7,1/7)$ or in $(2/7,1/3)\cup (2/3,5/7)$ such that $\mu _q$ is not  in the boundary of two different gaps of $L_q$ (that is, such that $s_q$ is {\em{primitive}})  and such that the minor leaf $\mu _q$ is bounded from $0$ by a nonperiodic  leaf of $L_{3/7}$ in the backward orbit of $\mu _{3/7}$. If $q$ satisfies this condition then so does $1-q$. An explicit example is given by $q=39/127$. We have $\mu _{39/127}=\mu_{40/127}$, and this is bounded from $0$ by the leaf of $L_{3/7}$ with endpoints $e^{2\pi i(9/28)}$ and $e^{2\pi i(17/56)}$. 

\medskip \medskip 

\noindent{\em{Example 3.}} Let $p=\frac{7}{15}$ and $q=\frac{5}{31}$. We have $\mu _{7/15}=\mu _{8/15}$ and $\mu _{5/31}=\mu _{6/31}$. The minimal leaves $\mu _{1/3}$ and $\mu _{1/7}$ satisfy $\mu _{1/3}<\mu _{7/15}$ and $\mu _{1/7}<\mu _{5/31}$. 
 There are no leaves of period four in $L_{5/31}$, because there are none between $\mu _{1/7}$ and $\mu _{5/31}$ (applying II.5.1 of \cite{T}, once again). However, there are some nontrivial $\sim _{7/15,5/31}$-equivalence classes in $L_{7/15}\cup L_{5/31}^{-1}$ which are larger than the closure of a single leaf or closure of a single finite-sided gap. For example, since $\mu _{14/31}<\mu _{7/15}$ and $2^2\cdot 14=-6\mbox{ mod }31$, there is a leaf $\ell =\mu _{14/31}^4\in L_{7/15}$ with endpoints $e^{2\pi i(\pm 6/31)}$, which has an endpoint in common with $\mu _{5/31}^{-1}=\mu _{6/31}^{-1}$.  Hence $\ell \cup \mu _{5/31}^{-1}$ is in a single equivalence class. But the only period five leaves in $L_{5/31}$ are the ones in the periodic orbit of $\mu _{5/31}=\mu _{6/31}$ (again by using II.5.1 of \cite{T} ---- because there are no period five leaves separating $\mu _{1/7}$ and $\mu _{5/31}$). So $\ell \cup \mu _{5/31}^{-1}$ is the whole equivalence class. 

Note that there is one equivalence class which contains three leaves of $L_{7/15}$ and three leaves of $L_{5/31}^{-1}$, because $\mu _{3/7}\in L_{7/15}$ and $\mu _{1/7}^{-1}\in L_{5/31}^{-1}$ and the orbits of these two leaves lie in a single equivalence class. Close to this equivalence class there will be other equivalence classes which contain one leaf of $L_{5/31}^{-1}$ and two leaves of $L_{7/15}$. There are also some other equivalence classes that contain two leaves of $L_{7/15}$ at opposite ends of a leaf of $L_{5/31}^{-1}$. For example, $\mu _{7/15}$ and $\mu _{2/5}$ are joined by a leaf in $L_{5/31}^{-1}$.

Let $f$ denote the rational map which is topologically conjugate to $[s_{7/15}\Amalg s_{5/31}]$. Then we claim that the closures of the Fatou components of $f$  are disjoint. Let $U_1$ and $U_2$ denote the periodic Fatou components of $f$ containing the critical values, of periods 4 and 5 respectively. By \ref{3.3}, any point in $\partial f^{j_1}(U_1)\cap \partial f^{j_2}(U_2)$ would have to be fixed by $f$. But the only fixed equivalence classes are those of  $\mu _{1/3}$ and $\mu _{1/7}^{-1}$. The leaf $\mu _{1/3}$ is an entire equivalence class, and the equivalence class containing $\mu _{1/7}^{-1}$ is the closure of the finite-sided gap whose boundary is the periodic orbit of $\mu _{1/7}^{-1}$, which is of period three. We have $\partial U_i\cap \partial f^j(U_i)=\emptyset $ for $i=1$, $2$ and $f^j(U_i)\ne U_i$, because the only possible intersection for $i=1$ is in $\varphi (\mu _{7/15})$, where $\varphi :\overline{\mathbb C}\to \overline{\mathbb C}$ satisfies $\varphi \circ (s_{7/15}\Amalg s_{5/31})=f\circ \varphi $. But we have seen that $\varphi ^{-1}(\varphi (\mu _{7/15}))=\mu _{7/15}$. Similarly the only possible intersection for $i=2$ is in $\varphi ^{-1}(\varphi (\mu _{5/31}^{-1}))$, but we have seen that $\varphi ^{-1}(\varphi (\mu _{5/31}^{-1}))$ is the union of $\mu _{5/31}^{-1}$ and one period five leaf in $L_{7/15}$. So once again the intersection is empty. Theorem \ref{2.4} applies to this example, but Theorem \ref{3.5} will not apply.

\medskip \medskip 

The rest of the paper will be devoted to proving the following theorem. By Proposition \ref{3.3}, the conditions of the theorem imply that distinct Fatou components have disjoint closures. The conditions of the  theorem hold for Examples 1 and 2, but not for Example 3, although Fatou components are disjoint in Example 3. The description of all nearby hyperbolic components, provided by the results of Section \ref{2}, suggests that not all nearby hyperbolic components are represented by matings in Example 3.  But it is difficult to be sure, because the theory so far developed is not sufficient to exclude all matings of polynomials with two cycles of some fixed periods. 

\begin{theorem}\label{3.5} Let $f\in V_{k,0}$ with $f\simeq s_{p}\Amalg s_{q}$, where:
\begin{itemize}
\item the semiconjugacy $\varphi $ with $\varphi \circ (s_p\Amalg s_q)=f\circ \varphi $ satisfies $\varphi (\infty )=c_1(f)$;
\item each $\sim _{p,q}$-equivalence class intersecting the boundary of the minor gap of $L_p$ is contained in the boundary of that minor gap, that is, is either a point or a single leaf of $L_p$.
\item  Each leaf in the forward orbit of $\mu _q^{-1}$ is in a separate equivalence class, and disjoint from the closure of the minor gap of $L_p$.
\end{itemize} 
 Then the post-critically-finite centre of each  type IV hyperbolic component in $V_{k,0}$, in a sufficiently small neighbourhood of  the closure of the hyperbolic component of $f$,  is Thurston equivalent  $s_{y}\Amalg s_{q}$, for some  $y$ with $y$ near the minor gap of  $L_p$.\end{theorem}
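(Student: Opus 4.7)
The approach I would take starts by unpacking what the hypothesis $\varphi(\infty)=c_1(f)$ says about the mating. A direct computation with the formula for $s_p\Amalg s_q$ shows that $\infty$ is a critical point of the mating whose forward orbit is the image under $z\mapsto z^{-1}$ of the critical orbit of $s_q$, while $0$ is a critical point whose orbit is the critical orbit of $s_p$. So $s_q$ encodes the $k$-periodic cycle of $c_1(f)$ and $s_p$ encodes the $m$-periodic cycle of $c_2(f)$; the nearby type IV components, for which $c_1(g)$ remains of period $k$ but $c_2(g)$ acquires a new (larger) period, will therefore correspond to changing only the $s_p$ factor. Under the hypothesis that every $\sim_{p,q}$-class is a single point, leaf, or finite-sided gap of $L_p$ or $L_q^{-1}$, I would fix a Markov partition $\mathcal{P}(f)$ whose graph is (isotopic to) the $\varphi$-image of a finite union of leaves and finite-sided gaps of $L_p\cup L_q^{-1}$, so that the nested central sets $P^j(f,v_2)$ correspond under $\varphi$ to the nested central gaps containing $0$ in the $j$-th iterated pullback of $L_p$.

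Next, for $g$ sufficiently close to $f$, I apply Theorem~\ref{2.4} (with the refinement \ref{2.6}) to obtain a sequence $f=f_1,f_2,\dots,f_N=g$ together with the integers $m_i,j_i,n_i$. The plan is to construct, by induction on $i$, odd-denominator rationals $y_i$ near $p$ with $\mu_{y_1}=\mu_p$, such that $\mu_{y_{i+1}}$ lies in the specific sub-gap of $L_{y_i}$ which corresponds under the (inductively constructed) semi-conjugacy to the Markov element $P^{n_i}(f_i,v_2)$, and with $\mu_{y_{i+1}}$ having period $m_{i+1}$. The existence of such a $y_i$ in the required sub-gap with the prescribed period follows from the usual Thurston combinatorial theory of $QML$: every gap of a critically periodic quadratic lamination contains minor leaves of every sufficiently large period. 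The assumption that neither $s_p$ nor $s_q$ is a tuning ensures these modifications are themselves non-tuning, matching the non-tuning clause of Theorem~\ref{2.4}. Once $y_i$ is produced, Theorem~\ref{3.2} gives a hyperbolic rational map $h_i$ with $h_i\simeq s_{y_i}\Amalg s_q$; its critical orbit portrait, read off from the combinatorics of $L_{y_i}\cup L_q^{-1}$, coincides by construction with the Markov combinatorics of $f_i$ dictated by Theorem~\ref{2.4}, so Thurston rigidity yields $h_i\simeq f_i$. Taking $i=N$ gives the conclusion.

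The main obstacle is showing that, as $y$ varies through the minor gap of $L_p$, the simple equivalence-class hypothesis persists, i.e.\ no leaf of $L_{y}$ acquires an endpoint on a leaf of $L_q^{-1}$, and no new finite-sided gap of $L_y\cup L_q^{-1}$ is created beyond those already in $L_p\cup L_q^{-1}$. This is where the non-tuning hypothesis is indispensable: it guarantees that the leaves of $L_p$ participating in the modifications have primitive combinatorics, so that any $y$ in a small enough neighbourhood of $\mu_p$ in $QML$ produces $L_y$ differing from $L_p$ only inside the minor gap and its backward orbit, leaving the interaction with $L_q^{-1}$ combinatorially intact. Verifying that this localisation is preserved through the inductive replacement $f_i\mapsto f_{i+1}$ — in particular that the successive polynomial-like restrictions implicit in Theorem~\ref{2.4} really do correspond, via $\varphi$, to the nested central gaps of $L_{y_i}$ — is the technical heart of the argument, and the step where the hypothesis that each $\sim_{p,q}$-class is a single leaf or gap is used most critically, by ruling out the combinatorial complications present in Example~2 of \ref{3.4}.
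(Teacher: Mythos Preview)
Your outline captures the correct high-level shape --- replace $p$ by a sequence $y_i$ and prove $f_i\simeq s_{y_i}\Amalg s_q$ by induction --- but the step where you write ``its critical orbit portrait, read off from the combinatorics of $L_{y_i}\cup L_q^{-1}$, coincides by construction with the Markov combinatorics of $f_i$ \dots\ so Thurston rigidity yields $h_i\simeq f_i$'' is a genuine gap. You cannot read the $\mathcal{P}(h_i)$-itinerary of $v_2(h_i)$ from the lamination without already having the semiconjugacy $\varphi_i$, which is exactly what is to be produced; and even a matching itinerary would not by itself pin down the Thurston class (think of twist ambiguities). Nothing in your sketch explains why, among the many minor leaves of period $m_{i+1}$ in the relevant gap, the one you pick yields a mating equivalent to $f_{i+1}$ rather than to some other centre with the same period.

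The paper does not go via portrait-matching at all. It builds the Thurston equivalence explicitly. For the base step it passes to a parabolic boundary map $f_{3/2}$ (so that a semiconjugacy $\varphi_{3/2}$ from the $p$--$q$ lamination picture is available), chooses a path $\beta$ in the dynamical plane of $f_{3/2}$ from $v_2(f_{3/2})$ to the periodic point $z_2(3/2)$, and proves (Lemma~\ref{3.8}) that $f_2\simeq \sigma_\zeta^{-1}\circ\sigma_\beta\circ f_{3/2}$ via the ``push-along-a-path'' homeomorphisms $\sigma_\gamma$. On the lamination side it takes a path $\alpha$ from the critical value through the minor gap to $e^{2\pi i y_2}$ and shows $s_{y_2}\Amalg s_q\simeq \sigma_\omega^{-1}\circ\sigma_\alpha\circ(s_p\Amalg s_q)$. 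The heart of the argument (end of Lemma~\ref{3.9}) is then that $\varphi_{3/2}\circ\alpha$ and $\beta$ are homotopic rel the forward orbit of $z_2(3/2)$, which is where the $S_m$-invariance of the pieces $\beta_j$ of $\beta$ is used. A preliminary lemma (Lemma~\ref{3.6}) is also needed: of the two fixed points $z_{2,1},z_{2,2}$ of the two-valued local inverse $S_{m_2,f}$, at least one has $\varphi^{-1}$-image on $S^1$ of period exactly $m_2$ (not $2m_2$); this is what selects $y_2$.

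Finally, the ``main obstacle'' you name --- persistence of the simplicity of $\sim_{y,q}$-classes --- is not what the paper confronts. The simplicity hypothesis on $\sim_{p,q}$ is used only at the outset, to control the components of $\varphi^{-1}(P^0(f))$ meeting $\partial F_2(f)$. The inductive machinery instead tracks nested arcs $Q_\ell\subset S^1$ satisfying $\varphi_\ell^{-1}(P^{n_\ell}(f_\ell))\subset Q_\ell\subset \varphi_\ell^{-1}(P^{n_\ell-r_\ell}(f_\ell))$, and the real work (Lemmas~\ref{3.12} and \ref{3.14}, and the definitions in \ref{3.13}) is the bookkeeping of the integers $r_\ell$ so that the local inverses $T_{m_\ell}$ remain defined and contracting on $Q_\ell$.
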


\section{Proof of the first step in the induction}\label{4}

\subsection{}\label{4.1}
Here is the start of the proof of \ref{3.5}. By Proposition \ref{3.3}, the closures of Fatou components are disjoint. Fix a graph $G(f)=G$ and  Markov partition ${\mathcal{P}}(f)={\mathcal{P}}$  as in Section  \ref{2} (and Theorem 1.2 of \cite{R1}), and with the properties provided there. Let ${\mathcal{P}}_j={\mathcal{P}}_j(f)$ be the sequence of partitions constructed there, and let $P^j=P^j(f)=P^j(f,v_2)\in {\mathcal{P}}_j(f)$ with $v_2(f)\in P^j$,  as in \ref{2.4} and \ref{2.6}. Let $g\in V_{k,0}$ be postcritically finite of type IV near the hyperbolic component of $f$. 
Define $Q_1'$ to be the  union of  two intervals of $S^1$ with endpoints in  $\varphi ^{-1}(P^0(f))$ containing $\varphi ^{-1}(P^0(f))$, and such that each interval contains an endpoint of the minor leaf of $L_p$. This is possible, by the hypotheses given. As in section \ref{3}, let $s:S^1\to S^1$ be given by $s(z)=z^2$.  Let $T_m$ be the two-valued  local inverse of $s^m$ defined on $Q_1'$ with $T_m(\varphi ^{-1}(F_2(f)))=\varphi ^{-1}(F_2(f))$. Then $T_m(Q_1')\subset Q_1'$. For some integer $i_1$, $T_m^{i_1}(Q_1')$ is in a sufficiently small neighbourhood of $\varphi ^{-1}(P^{i_1m}(f))$ that $T_m^{i_1}(Q_1')\subset \varphi ^{-1}(P^0(f))$.   We define $r_1=i_1m$. From now on we assume that $j_1$ is large enough that $j_1>3i_1$. 
 
Then, as in \ref{2.4} and \ref{2.6}, we define $j_1$ to be the largest integer $j$ such that $v_2(g)\in  P^{jm}(g,f)$, that is, the largest integer $j$ such that $P^{jm}(g,f)=P^{jm}(g,v_2)=P^{jm}(g)$. Again as in \ref{2.4} and \ref{2.6}, we define $n_1=j_1m$. We define $Q_1=T_m^{j_1}(Q_1')$ and then 
$$\varphi ^{-1}(P^{n_1}(f))\cap S^1 \subset Q_1\subset \varphi ^{-1}(P^{(j_1-i_1)m}(f))=\varphi ^{-1}(P^{n_1-r_1}(f)).$$

As in \ref{2.6}, we define $m_2$ to be the least integer $>n_1$ with 
$$g^{m_2}(v_2(g))\in P^{n_1-r_1}(g).$$
  We then have a local inverse $S_{m_2,g}$ defined on $P^{n_1-r_1}(g)$ mapping $g^{m_2}(v_2(g))$ to $v_2(g)$, and a corresponding two-valued  local inverse $S_{m_2,h}$ of $h^{m_2}$ for all $h\in V(P^0,\cdots P^{n_1};f)$ including $h=f$. Since $v_2(g)\in P^{n_1}(g)$ we have $m_2>n_1$ and since $j_1>3i_1$ we have 
$$S_{m_2,h}(P^{n_1-r_1}(h))\subset P^{n_1}(h).$$
We have
$$g^j(S_{m_2,g}(P^{n_1-r_1}(g)))\cap P^{n_1-r_1}(g)=\emptyset,\ \ \ 0<j<m_2,$$
because $g^j(v_2)\notin P^{n_1-r_1}(g)$ for $0<j<m_2$, and hence also 
$$f^j(S_{m_2,f}(P^{n_1-r_1}(f)))\cap P^{n_1-r_1}(f)=\emptyset,\ \ \ 0<j<m_2.$$
  Now $S_{m_2,f}$ is two-valued on $P^{n_1-r_1}(f)$ and also on the connected set \\ $S_{m_2,f}(P^{n_1-r_1}(f))$ --- which does not contain $v_2(f)$, since 
  $$v_2(f)\in P^{n_1+m}(f)=S_{m,f}(P^{n_1}(f)).$$
   Therefore, there are two single-valued local inverses $S_{m_2,i,f}$ of $f^{m_2}$ defined on $S_{m_2,f}(P^{n_1-r_1}(f))$, with image in $S_{m_2,f}(P^{n_1}(f))$, for $i=1$, $2$, and each of the sets 
$$\bigcap _{n>0}S_{m_2,i,f}^nS_{m_2,f}(P^{n_1-r_1}(f))=\bigcap _{n>0}S_{m_2,i,f}^nS_{m_2,f}(P^{n_1}(f))$$
contains a single point of period $m_2$, whose periodic orbits intersect $P^{n_1}(f)$ just in these points. We call these points $z_{2,1}(f)$ and $z_{2,2}(f)$, or $z_{2,1}$ and $z_{2,2}$, if no confusion can arise. 

\begin{lemma}\label{3.6}  For at least one of $i=1$ or $2$ the points in $\varphi ^{-1}(z_{2,i})\cap S^1$ are of period $m_2$ under $s$\end{lemma}
\noindent{\textbf{Remark}} Each set $\varphi ^{-1}(z_{2,i})\cap S^1$ contains either one or two points, but we do not use this fact at this stage: and the corresponding fact in the general inductive step might not be true, under the hypotheses we are using.
\begin{proof} We write $T_{m_2}$ for the two-valued  local inverse defined on $\varphi ^{-1}(P^{n_1-r_1}(f))$, with 
$$\varphi \circ T_{m_2}=S_{m_2,f}\circ \varphi .$$
In particular, $T_{m_2}$ is defined on $Q_1$, and since $\varphi (Q_1)$ is connected, and the image under $\varphi $ of each component of $Q_1$ intersects $P^{n_1}(f)$, we have  $T_{m_2}(Q_1)\subset Q_1$. Since $T_{m_2}$ is a two-valued contraction, there are exactly two points in $Q_1$ which are fixed under $T_{m_2}$, and thus have period $m_2$ under $s$. The images under $\varphi $ of these points are fixed under $f^{m_2}$ and hence must be the points $z_{2,i}$: either mapped to the same point $z_{2,i}$ or one to each of $z_{2,1}$ and$z_{2,2}$. So for at least one of $i=1$ or $2$, $\varphi ^{-1}(z_{2,i})\cap S^1$ consists of one or two points of period $m_2$ under $s$ .\end{proof}

\subsection{} \label{3.7}
From now on we fix $z_2(f)=z_{2,i}(f)$ for one of $i=1$, $2$ so that $\varphi ^{-1}(z_2(f))\cap S^1\subset Q_1$ consists of one or two points of period $m_2$. We take one of these points to be $e^{2\pi iy_2}$.  Choose a point $e^{2\pi iw_2}$ in the boundary of the minor gap of $L_p$, and in the same component $Q_{1,1}$ of $Q_1$ as $e^{2\pi iy_2}$,  and of period $mu_2$ for the least $u_2$ such that $T_m^{u_2}(Q_{1,1})\subset Q_{1,1}$. This point $w_2$ does exist, because each component of $Q_1'$ intersects the boundary of the minor gap of $L_p$, and since $Q_1=T_m^{j_1}(Q_1')$, the same is true for $Q_1$. Moreover, $u_2\le j_1+1$, because each interval of $Q_1'$ contains a preimage under $s$ of the other and $Q_{1,1}$ is a preimage under $s^{j_1m}$ of one of the intervals of $Q_1'$. Thus $s_{w_2}\Amalg s_q$ is equivalent to a rational map $h$, which is a tuning of $f_1$, and in the Mandelbrot set copy containing $f_1$. We write $f_{3/2}$ for the parabolic map on the boundary of the hyperbolic component containing $h$, where the parabolic point is fixed by $f_{3/2}^{km}$, if $km$ is the period of $e^{2\pi iw_2}$ under $s$. There is a continuous map $\varphi _{3/2}$ such that $\varphi _{3/2}\circ (s_p\Amalg s_q)=f_{3/2}\circ \varphi _{3/2}$ on $S^1\cup L_p\cup L_q^{-1}$. 

For $h\in V(P^0\cdots P^{n_1-r_1},f)$,there are exactly two  points of period $m_2$ in $P^{n_1-r_1}(h)$, except for  the map$h_2\in V(P^0\cdots P^{n_1},f)$ with a single parabolic periodic point of period $m_2$ and multiplier $m_2$. On any path in $V(P^0\cdots P^{n_1-r_1},f)$ starting from $f$ which avoids $h_2$, the function $z_2(h)$ can be chosen to be single-valued and continuous taking the previously defined value $z_2(f)$ at $f$. Two paths to $h$ which combine to give a simple closed loop round $h_2$ give different values to $z_2(h)$. So  we have  a path $f_t$ for $t\in [3/2,t_{j_1-i_1}]$ with $t_{j_1-i_1}>2$ such that 
$$v_2(f_2))=z_2(f_2),$$
$$f_t\in V(P^0,\cdots P^{n_1},f)=V(P^0,\cdots P^{n_1},f_{3/2})\mbox{ for all }t\in [3/2,t_{j_1}],$$
 and we can choose $\varepsilon $ so that 
\begin{equation}\label{3.5.0}f_{3/2+\varepsilon }\in V(P^0,\cdots P^{im};f_{3/2})\end{equation}
 for $i$ which can be taken arbitrarily large by taking $\varepsilon $ arbitrarily close to $0$. We can choose $f_t$ for $t\in [3/2,3/2+\varepsilon ]$ also, so that (\ref{3.5.0}) holds also for $t\in [3/2,3/2+\varepsilon ]$ replacing $3/2+\varepsilon $ by $t$. We can also choose $f_t$ so that 
 $$[3/2+\varepsilon,2]=\bigcup _{j=i-1}^{j_1}[t_{j+1},t_j],\ \ [3/2,3/2+\varepsilon ]=[t_{i+1},t_i],$$
 and 
\begin{equation}\label{3.5.4}f_t\subset V(P^0,\cdots P^{jm},f_{3/2})\setminus V(P^0,\cdots P^{(j+1)m},f_{3/2}),\ \ t\in [t_j,t_{j+1}],\ \ j_1\le j\le i.\end{equation}
Of course this implies that 
$$f_{t_j}\in \partial V(P^0,\cdots P^{jm},f_{3/2}),\ \ j_1-i_1\le j\le i.$$

We have a homeomorphism $\psi _t$, $t\in [3/2,t_{j_1}]=[3/2,2]$,  with $\psi _{3/2}=\mbox{identity}$ which maps $f_t^i(S_{m_2,f_t}(P^{jm}(f_t)))$  to $f_{3/2}^i(S_{m_2,f_{3/2}}(P^{jm}(f_{3/2})))$ for $0\le i<m_2$,  and $f_t^i(z_2(t))$ to $f_{3/2}^i(z_2(3/2))$ for $0\le i<m_2$. We can also choose $\psi _t $ so  that
\begin{equation}\label{3.5.1}f_{3/2}\circ \psi _t=\psi _t\circ f_t\mbox{ on }f_t^{-1}(P^{n_1-i_1m}(f_t)), t\in [3/2,2].\end{equation}
Note that $\psi_t$ does not map $v_2(f_t)$ to $v_2(f_{3/2})$. In fact, the conditions imposed imply that
$$\beta (t)=\psi _t(v_2(f_t)), \ \ t\in [3/2,2]$$
 is a path from $v_2(f_{3/2})$ to $z_2(3/2)\in P^{n_1}(f_{3/2})$ and 
 $$\zeta (t)=\psi _t(c_2(f_t))$$
 is a path from $c_2(f_{3/2})$ to the periodic preimage of of $z_2(3/2)$ satisfying $f_{3/2}\circ \zeta =\beta $, by (\ref{3.5.1}). We also write 
$$\beta _j=\beta \mid [t_{j+1},t_j],\ j_1\le j\le i,$$

Then we have 
\begin{equation}\label{3.5.5}f_{3/2}(\zeta (t))=f_{3/2}(\psi _t(c_2(f_t)))=\psi _t(f_t(c_2(f_t))=\psi _t(v_2(f_t))=\beta (t),\ \ t\in [3/2,t_{j_1}].\end{equation}
In particular, 
\begin{equation}\label{3.5.6} f_{3/2}\circ \beta =\zeta .\end{equation}

\begin{lemma}\label{3.30} We can choose $\beta _j$ for $j\le j_1$ so that, for each $0<i\le i_1$, either $\beta $ and $f^{im}(\beta )$ are disjoint, or $i$ is a multiple of $u_2$ and $\beta \subset f^{im}(\beta )$.\end{lemma}
\begin{proof} Each set $\partial V(P^0,\cdots P^{jm};f_{3/2})$ is a topological circle containing the point $\beta (t_j)=\beta _j(t_j)$. This follows from the Theorem 3.2 of \cite{R1}: points in $\partial V(P^0,\cdots P^{jm},f_{3/2})$ are in one-to-one correspondence with $\partial P^{jm}(f_{3/2})$, via the map $h\mapsto \psi _h(v_2(h))$, where 
$$\psi _h\circ h=f_{3/2}\circ h\mbox{ on }\bigcup _{\ell \le (j-j_1+i_1)m}h^\ell \partial P^{jm}(h),$$
 and this map is seen to be continuous by looking at the sets $V(\underline{Q})$ for infinite sequences $\underline{Q}$ with 
$$V(\underline{Q})\subset \partial V(P^0,\cdots P^{jm},f_{3/2})=V(P^0,\cdots P^{(j-1)m},\partial P^{jm},f_{3/2}).$$
Each of the sets $V(\underline{Q})$ is a point, by Theorem 3.2 of \cite{R1}.   Each set $P^{jm}(f_{3/2})\setminus P^{(j+1)m}(f_{3/2})$ for $j>j_1$ is disjoint from the forward orbit of $z_2(3/2)$ and any two paths in an annulus joining distinct boundary components are homotopic via twists round those boundary components. Also any two  paths in an annulus joining the inner boundary component to a fixed point in the interior of the annulus are homotopic via a homotopy fixing the interior endpoint and moving the other endpoint on the annulus boundary.  
Let $S_{mu_2}$ be the single-valued  local inverse of $f_{3/2}^{mu_2}$ which fixes $\varphi _{3/2}(e^{2\pi iw_2})$ and write
$$S_{mu_2}=S_{m,u_2}\circ \cdots \circ S_{m,1},$$
where each $S_{m,\ell }$ is a local inverse of $f_{3/2}^m$. We need to define the $\beta _j=\beta _{j,0}$ inductively, in terms of a larger sequence $\beta _{j,i}$ for $0\le i<u_2$, and $j\le j_1-i_1+i$ if $i\le i_1'=\mbox{min}(i_1,u_2)$ and, if $u_2>i_1$, then also for $j\le j_1+i-i_1+u_2$ if $i_1<i<u_2$. 
 Writing $\beta _{j,i}=\beta _{j,i'}$ if $i-i'=0\mbox{ mod }u_2$, we will have
\begin{equation}\label{3.30.2}\beta _{j_1-i,i}=f_{3/2}^{mi}(\beta _{j_1}),\ 0\le i\le i_1\end{equation} 
so that, if $u_2\le i_1$, then $\beta _j$ is defined for $j\ge j_1-nu_2$ for the largest integer $n$ with $nu_2\le i_1$. More generally and  precisely, we will have
\begin{equation}\label{3.30.1}\beta _{j,i}=S_{m,i}\beta _{j-1,i+1}.\end{equation}
with $i+1=u_2$ replaced by $0$ if $i=u_2-1$, wherever both $\beta _{j,i}$ and $\beta _{j-1,i+1}$ are defined. 

We start by choosing the $\beta _{j_1-i,i}$ to be arcs  satisfying (\ref{3.30.2}) for $i_1\ge i\ge 0$. Since
$$\beta _{j_1-i,i}\subset P^{(j_1-i)m}(f_{3/2})\setminus S_{m,f_{3/2}}(P^{(j_1-i)m}(f_{3/2})),$$
these arcs are disjoint, apart from possibly a common endpoint between $\beta _{j_1-i,i}$ and $\beta _{j_1-i-1,i+1}$, which is bound to hold if $u_2=1$, and can be avoided by simply moving the second endpoint of $\beta _{j_1}$ otherwise. Then for $0\le \ell \le i_1'$,  choose the arcs $\beta _{j_1-i_1'+\ell,i}$ to be disjoint for $i_1'\ge i\ge i_1'-\ell $ by doing this successively for $0\le \ell \le i_1'$ , by moving the second endpoint of $\beta _{j_1-i_1'+\ell,i_1'}$. If $i_1'=u_2-1$ then we define all $\beta _{j,i}$ for $j>j_1-i_1+i_1'$ using equation (\ref{3.30.1}) and all arcs are disjoint apart from some common endpoints. If $i_1'=u_2-1$, then the proof is finished. If $i_1'<u_2-1$, that is, $i_1'=i_1<u_2-1$, then we continue to choose the $\beta _{j_1-i_1+\ell,i_1}$ so that the arcs $\beta _{j_1-i_1+\ell,i}$ are disjoint for $i_1\ge i\ge i_1-\ell $ and $i_1< \ell \le u_2-1$. In particular, this is true for $\ell =u_2-1$.  Then once this is done, we choose the remaining $\beta _{j,i}$ so that (\ref{3.30.1}) is satisfied, and this implies that, for each $j\ge j_1-i_1+u_2-1$, the arcs $\beta _{j,i}$ are disjoint for $0\le i<u_2$, which is what is required.

\end{proof}

   If $\gamma :[a,b]\to \overline{\mathbb C}$ is an arc,  then we define $\sigma _\gamma $ to be a homeomorphism which is the identity outside a suitably small disc $D_\gamma $ neighbourhood of $\gamma $ and satisfies $\sigma _\gamma (\gamma (a))=\gamma (b)$. 

  \begin{figure}
\centering{\includegraphics[width=4cm]{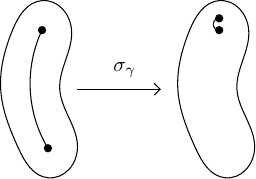}}
\caption{$\sigma _{\gamma }$}
\end{figure}

This does not define $\sigma _\gamma $ uniquely but does define it up to isotopy preserving $A$, if $A\subset \overline{\mathbb C}$ is any set disjoint from $D_\gamma $, that is, disjoint from $\gamma $, and $D_\gamma $ is sufficiently small. Similarly, if  $\gamma =\gamma _1*\cdots *\gamma _r$ is any path which can be written as a union of arcs $\gamma _i$, then we also define
 $$\sigma _\gamma =\sigma _{\gamma _r}\circ \cdots \circ \sigma _{\gamma _1}.$$
 
 \begin{lemma} \label{3.8}
 \begin{equation}\label{3.8.1}f_2\simeq \sigma _{\zeta }^{-1}\circ \sigma _\beta \circ f_{3/2}.\end{equation}
 \end{lemma}
 \begin{proof} Define $\beta ^t=\beta \vert [t,2]$ and $\zeta ^t=\zeta \vert [t,2]$. Then
 $$\sigma _{\zeta ^t}^{-1}\circ \sigma _{\beta ^t}\circ \psi _t\circ f_t\circ \psi _t^{-1}$$
 are homotopic postcritically-finite branched coverings, for $t\in [3/2,2]$.  The paths $\beta ^2$ and $\zeta ^2$ are trivial, while $\beta ^{3/2}=\beta $ and $\zeta ^{3/2}=\zeta $, and $\psi _{3/2}$ is the identity. 
 \end{proof}
  
  Now we are ready to complete the proof of the first step in the induction, with the following lemma. 
 
 \begin{lemma}\label{3.9}
 \begin{equation}\label{3.5.3}s_{y_2}\Amalg s_q\simeq f_2.\end{equation}
 \end{lemma}
 \begin{proof}
 Let $\alpha $ be the union of a path in the minor gap of $L_p$ from the critical value of $s_p\Amalg s_q$ to $e^{2\pi iw_2}$ and an arc  in $Q_1$ (that is, an arc of $S^1$) from $e^{2\pi iw_2}$ to $e^{2\pi iy_2}$.  Let $\omega $ be a path from the critical point to the periodic preimage of $e^{2\pi iy_2}$ satisfying $(s_p\Amalg s_q)(\omega )=\alpha $. The path $\alpha $ passes through no point in the forward orbit of $e^{2\pi iy_2}$ before its endpoint, and similarly for the path $\omega $. It follows immediately that 
 \begin{equation}\label{3.9.1}s_{y_2}\Amalg s_q\simeq \sigma _\omega ^{-1}\circ \sigma _\alpha \circ (s_p\Amalg s_q).\end{equation}
 We claim that  $\varphi _{3/2}\circ \alpha $ and $\beta $ are homotopic via a homotopy preserving endpoints and the forward orbit of $z_2(3/2)=\varphi _{3/2}(e^{2\pi iy_2})$, assuming that $\varphi _{3/2}$ is modified in the Fatou components to map critical points to critical points and critical values to critical values, and hence $\varphi _{3/2}\circ \omega $ and $\zeta $ are similarly homotopic. We see this as follows. We replace $\varphi _{3/2}$ by a homeomorphism $\varphi _{3/2,0}$ which is arbitrarily close to $\varphi _{3/2}$, and which maps the forward orbit of $z_2$ and the first $i_1m$ images of the critical values in the same way  as $\varphi _{3/2}$. Both $\beta $ and $\varphi _{3/2,0}(\alpha )$ are paths in $P^{n_1-r_1}(f_{3/2})$, with the same endpoints. In fact, $\beta $ is a path in $P^{n_1}(f_{3/2})$. By \ref{3.30}, there are no transversal intersections between $\beta $ and $f^{im}(\beta )$ for $0\le i\le i_1$ up to homotopy preserving the forward orbit of $z_2(3/2)$.    There are also no transversal intersections between $\alpha $ and $s^{im}(\alpha )$ for $0\le i\le i_1$, and hence also no transversal intersections between $\varphi _{3/2,0}(\alpha )$ and $f_{3/2}^{im}(\varphi _{3/2,0}(\alpha ))$ for $0\le i\le i_1$. By the definition of $m_2$, the  points in the forward orbit of $z_2(3/2)$ which lie in $P^{n_1-r_1}(f_{3/2})$ are precisely the points $f_{3/2}^{im}(z_2(3/2))$ for $0\le i\le i_1$, recalling that $r_1=i_1m$. So we need to see that the two paths $\varphi _{3/2,0}(\alpha )$ and $\beta $ are homotopic via a homotopy which does not pass through these points in the forward orbit. This follows from the invariance property. Suppose that $\beta $ and $\varphi _{3/2,0}(\alpha )$ are not homotopic in this way. Extend $\alpha $ to  path $\alpha '$ ending in $\partial Q_1$. The path $\alpha '$ still has the property that there are no transversal intersections between $\alpha '$ and $s^{im}(\alpha ')$ for $0\le i\le i_1m$. Then $\beta $ must have transverse intersections with $\varphi _{3/2,0}(\alpha ')$ which cannot be removed by homotopy. We concentrate on the final  essential intersection, on $\beta $, of $\beta $ with $\varphi _{3/2,0}(\alpha ')$, before the endpoint of $\beta $ at $z_2(3/2)$. This arc must bound an arc on $\varphi _{3/2,0}(\alpha )$ containing some point $f_{3/2}^{im}(v_2)$ for some $0<i\le i_1$. The image under $f_{3/2}^{im}$ of this final arc is bound to intersect itself. In the picture, $\varphi _{3/2,0}(\alpha ')$ is drawn as a straight line. 
 
  \begin{figure}
\centering{\includegraphics[width=8cm]{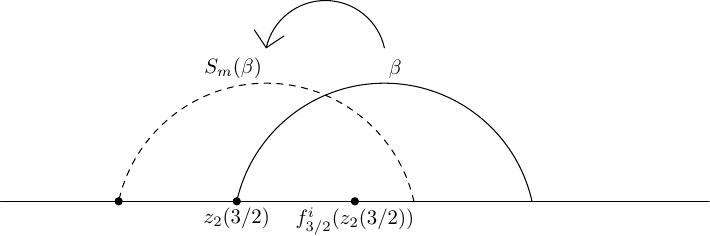}}
\caption{The final arc on $\beta $ and intersection with $S_m\beta $}
\end{figure}

 It follows that $\beta $ and $f_{3/2}^{im}(\beta )$ have transverse intersections, which gives the required contradiction to \ref{3.30}. 
 
 Since $\varphi _{3/2,0}$ can be chosen arbitrarily close to $\varphi _{3/2}$, we deduce that 
 $$ \sigma _\omega ^{-1}\circ \sigma _\alpha \circ (s_p\Amalg s_q)\simeq \sigma _{\zeta }^{-1}\circ \sigma _\beta \circ f_{3/2},$$
 and hence we have \ref{3.5.3}.
 \end{proof}
 
This completes the first step in showing that $f_2$ is equivalent to a mating with $s_q$. The equivalence provides a continuous surjective map $\varphi _2$, approximated arbitrarily closely by homeomorphisms, with 
$$\varphi _2\circ (s_{y_2}\Amalg s_q)=f_2\circ \varphi _2.$$
In order to continue, we need the following. 

\begin{lemma} \label{3.40} For any $n\ge 0$, let $R(g)$ be an isotopically varying set of ${\mathcal{P}}_n(g)$ which is not strictly contained in the backward orbit of $P^{n_1-r_1}(g)$ for $g\in V(P^0,\cdots P^{n_1},f_1)$. Then
 $$\varphi _2^{-1}(R(f_2))=\varphi _{3/2}^{-1}(R(f_{3/2}))=\varphi _1^{-1}(R(f_1)). $$

   \end{lemma}
   \begin{proof} This uses \ref{3.8} and \ref{3.9}. (\ref{3.8.1}) is an equivalence between $f_2$ and $\sigma _\zeta ^{-1}\circ \sigma _\beta  \circ f_{3/2}$. (\ref{3.9.1}) is an equivalence between  $s_{y_2}\Amalg s_q$ and 
   $\sigma _\omega ^{-1}\circ \sigma _\alpha \circ (s_p\Amalg s_q)$. The main part of \ref{3.9} is to replace (\ref{3.8.1}) by an equivalence between $f_2$ and $\sigma _{\varphi _{3/2}(\omega )}^{-1}\circ\sigma _{\varphi _{3/2}(\alpha )}\circ f_{3/2}$. A homeomorphism  $\chi_{1,0}$ can be chosen  with
   $$\sigma _{\varphi _{3/2}(\omega )} ^{-1}\circ \sigma _{\varphi _{3/2}(\alpha )} \circ (s_p\Amalg s_q)\simeq _{\chi _{1,0}}f_2$$
   and to  map each set $Q(f_{3/2})$ to the corresponding set $Q(f_2)$ for $Q(h)\in {\mathcal{P}}^{n_1-r_1}(h)$.  By (\ref{3.9.1}), a homeomorphism $\chi _{2,0}$ can chosen with
   $$s_{y_2}\Amalg s_q\simeq _{\chi _{2,0}}\sigma _\omega ^{-1}\circ \sigma _\alpha \circ (s_p\Amalg s_q)$$
   where $\chi _{2,0}$ is the identity outside $\varphi _{3/2}^{-1}(P^{(j_1-i_1)m}(f_{3/2}))$. Let $\varphi _{3/2,0}$ be a homeomorphism approximating $\varphi _{3/2}$, which maps the periodic orbit of $\infty $ to the periodic orbit of $c_2$, the points $0=c_1(s_p\Amalg s_q)$ and $(s_p\Amalg s_q)(0)=v_1(s_p\Amalg s_q)$ to $c_1(f_{3/2})$ and $v_1(f_{3/2})$ and the periodic orbit of $e^{2\pi iy_1}$ to the periodic orbit under $f_{3/2}$ of $\varphi _{3/2}(e^{2\pi iy_1})$, and hence, provided that $\varphi _{3/2,0}$ is a sufficiently close approximation to $\varphi _{3/2}$,
   $$\sigma _{\varphi _{3/2,0}(\omega )}^{-1}\circ \sigma _{\varphi _{3,0}(\alpha )}\circ f_{3/2}\simeq _{\varphi _{3/2,0}}\sigma _\omega ^{-1}\circ \sigma _\alpha \circ (s_p\Amalg s_q).$$
   Now define
   $$\varphi _{2,0}=\chi _{1,0}\circ\varphi _{3/2,0}\circ \chi _{2,0}$$
   and then define  the sequence of homeomorphisms $\varphi _{2,i}$ inductively by 
   $$f_2\circ  \varphi _{2,i+1}=\varphi _{2,i}\circ (s_{y_2}\Amalg s_q).$$
   and $\varphi _{2,i}$ maps the periodic orbits of $0$ and $\infty $ under $s_{y_2}\Amalg s_q$ to the periodic orbits of the critical points of $f_2$. Then
   $$\varphi _{2,j}=\chi _{1,j}\circ \varphi _{3/2,j}\circ \chi _{2,j}$$
   where the sequences $\chi _{1,j}$, $\chi _{2,j}$ and $\varphi _{3/2,j}$ are defined inductively by:
   $$f_2\circ \chi _{1,j+1}=\chi _{1,j}\circ (\sigma _{\varphi _{3/2,j}(\omega )}^{-1} \circ \sigma _{\varphi _{3/2,j}(\alpha )} \circ f_{3/2})$$
   and $\chi _{1,j+1}$ maps $Q(f_{3/2})$ to $Q(f_2)$ for all $Q(f_{3/2})\in {\mathcal{P}}_{n_1}(f_{3/2})$;
   $$(\sigma _{\varphi _{3/2,j}(\omega )}^{-1}\circ \sigma _{\varphi _{3/2,j}(\alpha )} \circ f_{3/2})\circ \varphi _{3/2,j+1}=\varphi _{3/2,j}\circ (\sigma _\omega ^{-1}\circ \sigma _\alpha \circ(s_p\Amalg s_q));$$
   and $\varphi _{3/2,j+1}$ maps the periodic orbit of $e^{2\pi iy_2}$ to the periodic orbit of $\varphi _{3/2,j}(e^{2\pi iy_2})$, and finally
   $$(\sigma _\omega ^{-1}\circ \sigma _\alpha \circ(s_p\Amalg s_q))\circ \chi _{2,j+1}=\chi _{2,j}\circ s_{y_2}\Amalg s_q,$$
   and $\chi _{1,j}$ fixes the periodic orbit of $e^{2\pi iy_2}$.
   
   The expanding properties of $f_2$ and $f_{3/2}$ ensure that $\varphi _{2,j}$ converges to $\varphi _2$ and $\varphi _{3/2,j}$ converges to $\varphi _{3/2}$ outside the backward orbit of $\varphi _{3/2}^{-1}(P^{n_1-r_1}(f_{3/2}))$ and $\chi _{1,j}$ converges to $\chi _1$ outside the backward orbit of $P^{n_1-r_1}(f_{3/2})$.
   
The inductive definitions and the definitions of the paths  $\alpha $, $\omega $ ensure that  $\chi _{2,j'}$ is the identity on 
$(s_{y_2}\Amalg s_q)^{-j}(\varphi _{3/2}^{-1}(P^{n_1-r_1}(f_{3/2})))$ for $j'\le j$ and $\varphi _{3/2,j'}$ maps the sets $(s_{y_2}\Amalg s_q)^{-j}(\varphi_{3/2}^{-1}(P^{n_1-r_1}(f_{3/2})))$  to sets $f_{3/2} ^{-j}(P^{n_1-r_1}(f_{3/2}))$ and $\chi _{1,j'}$ maps sets $f_{3/2}^{-j}(P^{n_1-r_1}(f_{3/2}))$ to sets $f_2^{-j}(P^{n_1-r_1}(f_{2}))$. It follows that, outside the backward orbit of  $P^{n_1-r_1}(f_{3/2})$, 
$$\varphi _2=\chi _1\circ \varphi _{3/2}$$
and the result follows.
        
   \end{proof}

 \section{Second step in the induction}\label{5}
Before doing the general step in the induction, we consider the second step, that is, the construction of $f_3$. This case is a little more complicated that the general step. We need to construct the set $Q_2$, and, as forewarned in \ref{2.6}, a set $P^{n_2-r_2,1}(f_2)\subset P^{n_2-r_2}(f_2)$ which contains $\varphi _2(Q_2)$. This, in turn, is constructed from a set $Q_2'=s^{k-m+(j_2-1)m_2}(Q_2)$ for a suitable $0\le k<m$. From now on we assume, as we may do, that the sets $f^i(P^0(f))$, for $0\le i<m$, are disjoint. We also assume as we may do that $i_1$ is even, so that $r_1=i_1m$ is also even.  We define $m_2'$ to be the largest integer $i<m_2$ with $g^i(v_2(g))\in P^{r_1}(g)$. Then $m_2'\ge n_1-r_1$, and if $m_2'>n_1-r_1$, we must have $m_2'\ge n_1+m$. 
%We then define $m_2''=n_1-\frac{1}{2}r_1$ if $m_2'=n_1-r_1$ or $j_2>1$, and otherwise $m_2''$ is the largest integer $i<m_2'$ with $g^i(v_2(g))\in P^{r_1}(g)$ and $g^j(v_2(g))\notin P^0(g)$ for at least one $i<j<m_2'$. Thus, $m_2''\ge n_1-\frac{1}{2}r_1$ in all cases. 

We need the following.

\begin{lemma}\label{3.10} Define $n_2-r_2=m_2'+(j_2-1)m_2$.
There exists $Q_2\subset S^1$ such that every component of $Q_2$ intersects the minor gap of $L_{y_2}$, and a set $Q_3'\subset S^1$, consisting of at most two components, both intersecting the minor leaf $\mu _{y_2}$ of $L_{y_2}$, and  there are  unions 
$$P^{n_2-r_2,0}(f_2),\ P^{n_2-r_2,1}(f_2),\ P^{n_3-r_3,0}(f_2)$$
  of sets of 
  $${\mathcal{P}}_{m_2}(f_2),\ {\mathcal{P}}_{j_2m_2}(f_2),\ {\mathcal{P}}_{m_2}(f_2),$$
   where all these unions are  closed topological discs, with
\begin{equation}\label{3.10.7}P^{n_2-r_2,0}(f_2)\subset P^{n_3-r_3,0}(f_2)\subset P^{m_2'}(f_2),\end{equation}
\begin{equation}\label{3.10.6}P^{n_2-r_2,1}(f_2)\subset P^{n_2-r_2}(f_2),\end{equation}
%$$P^{n_2-r_2,1}(f_2)=P^{n_2-r_2}(f_2)\mbox{ if } n_2-r_2>n_1-r_1,$$
and  such that
 
\begin{equation}\label{3.10.5}\begin{array}{l}\varphi _2^{-1}(P^{n_2}(f_2))\cap S^1\subset Q_2\subset \varphi _2^{-1}(P^{n_2-r_2,1}(f_2))\cap S^1\\ \subset Q_3'\subset\varphi _2^{-1}(P^{n_3-r_3,0}(f_2)).\end{array}\end{equation}
Also

\begin{equation}\label{3.10.1}g^i(v_2(g))\notin P^{n_2-r_2,1}(g)=\emptyset ,\ 0<i<m_3,\ i\ne m_2,\end{equation}
and for $h\in V(P^0,\cdots P^{n_2},g)$,
\begin{equation}\label{3.10.11}h^i(P^{n_2-r_2,0}(h))\cap P^{n_2-r_2,0}(h)=\emptyset, 0<i<m_2,\end{equation}
\begin{equation}\label{3.10.3}h^i(S_{m_3,h}(P^{n_2-r_2,1}(h)))\cap S_{m_3,h}(P^{n_2-r_2,1}(h))=\emptyset , 0<i<m_3,\end{equation}
\begin{equation}\label{3.10.2}h^i(S_{m_3,h}(P^{n_2-r_2,1}(h)))\cap P^{n_2-r_2,1}(h)=\emptyset , 0<i<m_3,\ i\ne m_2,\end{equation}
\begin{equation}\label{3.10.12}h^i(P^{n_3-r_3,0}(h))\cap P^{n_3-r_3,0}(h)=\emptyset, 0<i<m_2.\end{equation}

\end{lemma}

\begin{proof}
By the definition of $m_2$, there is  a least $0<k\le m$ such that $f_2^{m_2-k}(v_2)\notin f_2^i((P^m(f_2)))$ for all $0<i\le m $ and hence such that $f_2^{m_2-k}(S_{m_2,f_2}(P^{0}(f_2)))$  is disjoint from $f_2^i(P^m(f_2))$, $0<i\le m$. We have, by \ref{3.40},
$$\varphi ^{-1}(f_1^i(P_{jm}(f_1)))=\varphi _2^{-1}(f_2^i(P^{jm}(f_2)))\mbox{ for }j\le j_1.$$
We define $Q_2'$ to be a union of at most two intervals of $S^1$, with endpoints in $\varphi _2^{-1}(f^{-1}(f^{m-k+1}(P^{m}(f_2)))\setminus f^{m-k}(P^m(f_2)))$, containing the two shorter complementary intervals of of $s_{y_2}^{-k}(\mu _{y_2})$, where $\mu _{y_2}$ is the minor leaf of $L_{y_2}$. This set is disjoint from its images under $s^i$, for $0<i\le m$. We can also choose $\varepsilon >0$ so that the $\varepsilon $-neighbourhood of $Q_2'$ is disjoint from its forward images under $s^i$ for $0<i<m$. 
Similarly $Q_2''$ is defined to be the union of two intervals in $s^{k-m_2}(Q_2')$ which intersect $\varphi _2^{-1}(P^{m_2}(f_2))$. Then we define $f_2^{m_2-k}(P^{n_2-r_2,0}(f_2))$ to be the union of sets of ${\mathcal{P}}_k(f_2)$ which intersect $\varphi _2(Q_2')$, together with any complementary components within a small neighbourhood, so that the unoin of all these sets is a closed topological disc. Then $P^{n_2-r_2,0}(f_2)$ is similarly the union of sets of ${\mathcal{P}}_{m_2}(f_2)$ which intersect $\varphi _2(Q_2'')$.  Then we define
$$P^{n_2-r_2,1}(f_2)=S_{m_2,f_2}^{(j_2-1)m_2}(P^{n_2-r_2,0}(f_2)),$$
$$Q_2=T_{m_2}^{j_2-1}Q_2'',$$
where $T_{m_2}$ is the (multivalued) local inverse of $s^{m_2}$ such that $\varphi _2\circ T_{m_2}=S_{m_2,f_2}\circ \varphi _2$
These  definitions ensure that the first two inclusions of (\ref{3.10.5}) are satisfied.

 Inverse images under $\varphi $ of sets  in the partition ${\mathcal{P}}_0(f_1)$ (and hence also ${\mathcal{P}}_k(f_1)$) can be assumed to be arbitrarily close to subsets 
of $\{ z:|z|\le 1\}$  and  $\{ z:|z|\le 1\}$. and we can also assume, replacing ${\mathcal{P}}_0(h)$ by ${\mathcal{P}}_{r_0}(h)$ for some $r_0$ depending only on $\varepsilon $ if necessary, that $\varphi _2^{-1}(f_2^{m_2-k}(P^{n_2-r_2,0}(f_2)))$ is within 
$\varepsilon $ of $Q_2'$ and hence disjoint from its forward images under $s^i$ for $0<i<m$. Now the Markov property for ${\mathcal{P}}_k(f_2)$ implies a Markov property for $Q_2'$. In particular   $s^i(Q_2)$ is either contained in $s^j(Q_2)$ or disjoint from it for $0\le i<j\le j_2m_2-k$. This Markov property and the definition of $P^{n_2-r_2,1}(f_2)$ then imply a Markov property for sets $f_2^{i}(P^{n_2-r_2,1}(f_2))$ in that  for any $0\le i<j\le j_2m_2-k$, the $f_2^i(P^{n_2-r_2,1}(f_2))$ is either contained in or disjoint from $f_2^{j}(P^{n_2-r_2,1}(f_2))$. Let $i_0$ be the number of sets of ${\mathcal{P}}_k(f_2)$ in $f_2^{j_2m_2-k}(P^{n_2-r_2,1}(f_2))=f_2^{m_2-k}(P^{n_2-r_2,0}(f_2)$. By choice of $i_1$, we can assume that $i_0\le i_1/10$. 

Let $t$ be the largest integer  with $t<m_2-k$ such that 
$$f_2^t(P^{n_2-r_2,0}(f_2))\cap P^{2i_0m}(f_2)\ne \emptyset .$$ Then 
$$f_2^t(P^{n_2-r_2,0}(f_2))\cap P^{(3i_0+1)m}(f_2)=\emptyset ,$$ 
and 
$$m_2'+r_1/2<t<m_2-3i_0m,$$
 since $i_0m\le i_1m/10=r_1/10$.  Since $f_2^{t}(P^{n_2-r_2,0}(f_2))$ is a union of $\le i_0$ sets of ${\mathcal{P}}_{m_2-t}(f_2)$, we have 
$$f_2^{t}(P^{n_2-r_2,0}(f_2))\subset P^{0}(f_2)\setminus S_{m,f_2}^{3i_0+1}(P^0(f_2)),$$
 and hence
\begin{equation}\label{3.10.9}P^{n_2-r_2,0}(f_2)\subset P^{m_2'}(f_2)\setminus S_{m,f_2}^{3i_0+1}(P^{n_1}(f_2)).\end{equation}
In particular this gives (\ref{3.10.6}). Now we consider (\ref{3.10.11}), which it suffices to prove for $h=g$. Since $g^{m_2-k}(P^{n_2-r_2,0}(g))$ does not contain $g^i(S_{m,g}(P^0(g))$ for any $0<i\le m$, the same is true for $g^j(P^{n_2-r_2,0}(g))$ for all $0\le j\le m_2-k$. Similarly, since $g^{m_2-k}(P^{n_2-r_2,0}(g))$ does not contain $g^i(S_{m,g}^n(P^0(g))$ for any $0<i\le m$ and $n\ge 1$, the same is true for $g^j(P^{n_2-r_2,0}(g))$ for any $0\le j\le m_2-k$.  So now suppose that 
$$g^j(P^{n_2-r_2,0}(g))\cap P^{n_2-r_2,0}(g)\ne \emptyset$$
 for some $0<j<m_2$. Then  $0<j\le m_2-k$. Since $v_2(g)\in P^{n_1}(g)$, we know that 
$$P^{n_2-r_2,0}(g)\cap P^{n_1}(g)\ne \emptyset $$ and hence 
$$P^{n_2-r_2,0}(g)\subset P^{n_1-i_0}(g)$$
and then since $g^j(P^{n_2-r_2,0}(g))$ does not contain $P^{n_1-i_0}(g)$ we have
$$g^j(P^{n_2-r_2,0}(g))\subset P^{n_1-2i_0}(g)\subset P^{n_1-r_1}(g)$$
and hence $g^j(v_2(g))\in P^{n_1-r_1}(g)$, contradicting the definition of $m_2$. So (\ref{3.10.11}) is proved.  Then 
$$g^{jm_2}(v_2(g))\in S_{m_2,g}^{j_2-j}(P^{n_2-r_2,0}(g))\setminus S_{m_2,g}^{j_2+1-j}(P^{n_2-r_2,0}(g)),\ 0<1\le j_2$$
and so by a similar argument we see that $g^i(v_2(g))\notin P^{n_2-r_2,1}(g)$ for $jm_2<i\le (j+1)m_1$ for all $0\le j<j_2-1$ and hence, by the definition of $m_3$, we have (\ref{3.10.1}). 

 Now any two sets $g^i(S_{m_3,g}(P^{n_2-r_2,1}(g))$ and $g^j(S_{m_3,g}(P^{n_2-r_2,1}(g)))$ for $0\le i\le j\le m_3$ are either disjoint or the first is contained in the other, by the Markov property, which holds for $g$ as it holds for $f_2$. So (\ref{3.10.2}) holds for $h=g$, and hence for all $h\in V(P^0,\cdots P^{n_2},g)$. Similarly, (\ref{3.10.3}) holds except possibly for $i=m_2$, which, again, we only need to consider for $h=g$. So if (\ref{3.10.3}) does not hold for $h=g$ and $i=m_2$ then we have 
$$S_{m_3,g}(P^{n_2-r_2,1}(h))\subset g^{m_2}(S_{m_3,g}(P^{n_2-r_2,1}(g))))\subset P^{n_2-r_2,1}(g).$$ 
But then applying $g^{m_3-m_2}$ to the first inclusion, we see that $g^{m_3-m_2}(v_2)\in P^{n_2-r_2,1}(g)$,  and hence $m_3=2m_2$ and $S_{m_3,g}=S_{m_2,g}^2$ and $j_2\ge 1$, which is a contradiction. So (\ref{3.10.2}) holds also.

Now we construct $Q_3'$ and $P^{n_3-r_3,0}(f_2)$.
%which is done similarly to the case $n_2-r_2>n_1$ but just with $P^{n_2-r_2,1}(f_2)$ replacing $P^{n_2-r_2}(f_2)$.
 Enlarging  $\varphi _2^{-1}(f_{2}^{m_2-k}(P^{n_2-r_2,1}(f_2)))$ to a union $Q_3''$ of two intervals. again, assuming again   that inverse images under $\varphi $ (and hence also under $\varphi _2$) of sets  in the partition ${\mathcal{P}}_0(f_1)$ are sufficiently close to subsets of $\{ z:|z|\le 1\}$  and  $\{ z:|z|\le 1\}$, we obtain that $Q_3''$ is disjoint from $\varphi _2^{-1}(f_2^{i}(P^m(f_2)))$ for $0\le i<m$. Again making suitable assumptions about the sets in ${\mathcal{P}}_0(h)$ and hence also of sets in ${\mathcal{P}}_k(h)$ we can, as before, assume that the union of sets of ${\mathcal{P}}_k(f_2)$ which intersect $\varphi _2(Q_3'')$ extending to  include complementary componenents  in a small neighbourhood and hence form a closed toological disc, is disjoint from $f_2^{i}(P^m(f_2))$ for $0\le i<m$.  We call this union of sets $f_2^{m_2-k}(P^{n_3-r_3,0})$. Pulling back  $Q_3''$ under $s^{m_2-k-m_2'}$ to $Q_3'''$,  and $f_2^{m_2-k}(P^{n_3-r_3,0})$ under the appropriate local inverse of $f^{m_2-k-j_12m+i_1m}$ we obtain 
 $$Q_3'''\subset \varphi _2^{-1}(f_2^{m_2'm}(P^{n_3-r_3,0}(f_2)))\subset \varphi_2^{-1}(P^0(f_2)),$$
  and then pulling back $f_2^{(j_1-i_1)m}(P^{n_3-r_3,0}(f_2))$ under $S_{m,f_2}^{j_1-i_1}$ to $P^{n_3-r_3,0}(f_2)$ gives (\ref{3.10.7}, since by its definition $P^{n_3-r_3,0}(f_2)$ is a larger union of sets than $P^{n_2-r_2,0}(f_2)$. Applying $S_{m_2,f_2}^{j_2-1}$ then gives (\ref{3.10.6}) again.  Then pulling back $Q_3'''$ under $s^{m_2'}$ to $Q_3'$ gives
\begin{equation}\label{3.10.8}Q_3'\subset \varphi _2^{-1}(P^{n_3-r_3,0}(f_2)),\end{equation} 
and this gives (\ref{3.10.5}) in this case. We also have 
$$P^{n_3-r_3,0}(f_2)\subset P^{m_2'}(f_2)\subset P^{n_1-r_1}(f_2),$$
and in exactly the same way as (\ref{3.10.11}), we obtain (\ref{3.10.12}).
\end{proof}

\section{The general step in the induction}\label{6}

\subsection{}\label{6.1}

 Now we need to show that the inductive step works after the second step. So we need to choose rational maps $f_\ell $, sets  $Q_\ell \subset S^1$ and points $e^{2\pi iw_\ell },\ e^{2\pi iy_\ell }\in Q_\ell $,  a continuous surjective map $\varphi _\ell $, approximated arbitrarily closely by homeomorphisms, where we will show that 
$$\varphi _\ell \circ (s_{y_\ell }\Amalg s_q)=f_\ell \circ \varphi _\ell .$$
As before, we define $r_1=i_1m$.
 For $\ell \ge 2$, let $m_\ell '$ be the greatest integer $i<m_\ell $ such that $g^i(v_2)\in P^{r_1}(g)$. We define
$$n_2-r_2=(j_2-1)m_2+m_2',$$
as before, and for $\ell \ge 3$,
  $$n_\ell -r_\ell =j_\ell m_\ell +m_{\ell -1}'.$$
As in the case $\ell =1$, we want every component of $Q_\ell $ to intersect $\varphi _\ell ^{-1}(F_2(f_\ell ))$. We also want
\begin{equation}\label{6.1.1}\varphi _\ell ^{-1}(P^{n_\ell }(f_\ell ))\cap S^1\subset Q_\ell \subset \varphi _\ell ^{-1}(P^{n_\ell -r_\ell }(f_\ell ))\cap S^1,\end{equation}
apart from some exceptional cases, where we require
\begin{equation}\label{6.1.3}P^{n_\ell ,1}(h)\subset P^{n_\ell -r_\ell ,1}(h)\end{equation}
for all $h\in V(P^0,\cdots P^{n_\ell},g)$ and 
\begin{equation}\label{6.1.2}\varphi _\ell ^{-1}(P^{n_\ell ,1}(f_\ell ))\cap S^1\subset Q_\ell \subset \varphi _\ell ^{-1}(P^{n_\ell -r_\ell ,1}(f_\ell ))\cap S^1,\end{equation}
The exceptional cases are $\ell =2$ and $\ell =3$ and $\ell \ge 4$, where $\ell -1$ is an {\em{exceptional value}}. We say that $2$ is an {\em{exceptional value}}, and $3$ is an {\em{exceptional value}} if 
$$S_{m_3,g}=S_{m_2,g}^{j_2}S_{m,g}^t$$
for some $t\le i_01$. Inductively, we say that $\ell \ge 4$ an {\em{exceptional value}} if $i$ is an exceptional value for $2\le i<\ell $ and
$$S_{m_\ell,g}=S_{m_{\ell -1},g}^{j_{\ell -1}}S_{m_{\ell -2},g}.$$

As in \ref{2.6} we define 
$$P^{n_{\ell +1}}(h)=S_{m_{\ell +1},h}^{j_{\ell +1}}(P^{n_\ell -r_\ell }(h)),$$
and 
$$P^{n_{\ell +1},1}(h)=S_{m_{\ell +1},h}^{j_{\ell +1}}(P^{n_\ell-r_\ell,1}(h)),$$
if we need $P^{n_\ell -r_\ell ,1}(h)\ne P^{n_\ell -r_\ell }(h)$. 

 In such cases $P^{n_\ell -r_\ell ,1}(h)$ is a closed topological disc which is a union of sets of ${\mathcal{P}}_{n_\ell -r_\ell +m_2-m_2'}(h)$ with
$$P^{n_\ell -r_\ell ,1}(h)\subset P^{n_\ell -r_\ell }(h)$$
 and hence  similar properties hold for $P^{n_{\ell +1},1}(h)$. We will also require a Markov property for $P^{n_\ell -r_\ell ,1}(g)$: that any components of $g^{-i}(P^{n_\ell -r_\ell ,1}(g))$ and $g^{-j}(P^{n_{\ell ' }-r_{\ell'} ,1}(g))$ for any $i$, $j\ge 0$, and any $\ell $, are either disjoint, or one is contained in the other, if either $\ell =\ell '=2$, which is a case already considered, or $\ell \ge \ell '\ge 3 $ and $\ell -1$ is an exceptional value.  This will follow from similar Markov properties for the sets $P^{n_2-r_2,0}(h)$ and $P^{n_3-r_3,0}(h)$, since for $\ell \ge 3$, either  $P^{n_\ell -r_\ell ,1}=P^{n_\ell -r_\ell }(h)$  or , if $\ell -1$ is an exceptional value then $P^{n_{\ell } -r_{\ell},1}(h)$ will be a component of $h^{m_2'-n_\ell +r_\ell}(P^{n_3-r_3,0}(h))$.    We define 
$$P^{n_\ell -r_\ell ,1}(h)=P^{n_\ell -r_\ell }(h),\ \ \ P^{n_{\ell +1},1}(h)=P^{n_{\ell+1} }(h)$$
 if $\ell -1$ is not an exceptional value. 

As in \ref{2.6}, we define $j_\ell $, for all $\ell \ge 1$,  to be the largest integer $\ge 1$ such that $v_2(g)\in S_{m_\ell ,g}^{j_\ell }(P^{n_{\ell -1}-r_{\ell -1},1}(g))$,  and then 
$$n_\ell =n_{\ell -1}-r_{\ell -1}+j_\ell m_\ell $$
 for $\ell \ge 1$, remembering that $n_0=r_0=0$. So
 $$v_2(g)\in P^{n_\ell ,1}(g)\setminus S_{m_\ell ,g}(P^{n_\ell,1}(g)).$$
Then we define $m_{\ell +1}$, for $\ell \ge 2$, to be the least integer 
 $>j_\ell m_\ell $ and with 
 $$g^{m_{\ell +1}}(v_2(g))\in P^{n_\ell -r_\ell ,1}(g).$$ 
We need the following.
 
 \begin{lemma}\label{3.12} 
 
For $\ell \ge 2$,
 \begin{equation}\label{3.12.2}g^i(v_2(g))\in P^{n_\ell -r_\ell ,1}(g)\Rightarrow i=0\mbox{ or }i=m_\ell\mbox{ or }i\ge m_{\ell +1}.\end{equation}
 For $h\in V(P^0,\cdots P^{n_{\ell }-r_\ell +m_{\ell +1}},g)$, 
\begin{equation}\label{3.12.3}h^i(S_{m_{\ell +1},h}(P^{n_\ell -r_\ell,1}(h)))\cap S_{m_{\ell +1},h}(P^{n_\ell -r_\ell,1}(h))=\emptyset,\ 0<i<m_{\ell +1},\end{equation}
\begin{equation}\label{3.12.4}h^i(S_{m_{\ell +1},h}(P^{n_\ell -r_\ell,1}(h)))\cap P^{n_\ell -r_\ell,1}(h)=\emptyset,\ 0<i<m_{\ell +1},\ i\ne m_\ell,\end{equation}
\begin{equation}\label{3.12.0}\begin{array}{ll}n_3-r_3>n_2+(j_3-1)m_3&\ \\
P^{n_{3}-r_{3}}(h)\subset S_{m_{3},h}^{j_{3}-1}(P^{n_2 }(h)), &\ \\
n_{\ell +1}-r_{\ell +1}\ge n_\ell +(j_{\ell +1}-1)m_{\ell +1}+m_{\ell -1}&\mbox{ if }\ell \ge 3 ,\\ 
P^{n_{\ell +1}-r_{\ell+1}}(h)\subset S_{m_{\ell +1},h}^{j_{\ell +1}-1}(P^{n_\ell ,1}(h))&\mbox{ if }\ell \ge 3.\end{array}\end{equation}
 \begin{equation}\label{3.12.1}P^{n_{\ell+1} -r_{\ell+1},1}(h)\cap h^i(P^{n_{\ell+1} -r_{\ell+1},1}(h))=\emptyset , 0<i<m_{\ell +1}\end{equation}
and for $\ell \ge 0$
\begin{equation}\label{3.12.5}m_{\ell+3}\ge j_{\ell +2}m_{\ell +2}+m_{\ell +1}\end{equation}
\end{lemma}
\begin{proof} 

These are proved inductively. By (\ref{3.10.5}) to (\ref{3.10.2}) we have (\ref{3.12.2}) to (\ref{3.12.4}) for $\ell =2$. 

$g^i(v_2(g))\notin P^{n_\ell -r_\ell}(g)$ for $i<m_\ell $, $i\ne m_{\ell -1}$ and  by the inductive hypothesis of (\ref{3.12.0}) we also have  $g^{m_{\ell -1}}(v_2)\notin P^{n_\ell -r_\ell}(g)$. Since $P^{n_\ell -r_\ell,1}(g)\subset S_{m_\ell,g}^{j_\ell -1}(P^{n_{\ell -1}-r_{\ell -1},1}(g))$ we also have $g^{im_\ell}(v_2)\notin P^{n_\ell -r_\ell,1}(g)$ for $1<i\le j_\ell $. If $j_\ell >1$ then by the inductive hypothesis, since $P^{n_\ell -r_\ell ,1}=S_{m_\ell ,g}^{j_\ell -j}(P^{n_{\ell -1}-r_{\ell -1},1}(g))$ for $1\le j<j_\ell $, the sets $g^i(P^{n_\ell -jm_\ell,1 }) $, for $0\le i<m_\ell $ are disjoint, and hence $g^{jm_\ell +i}(v_2)\notin P^{n_\ell -r_\ell ,1}(g)$ for $0< i\le m_\ell $ and $1\le j<j_\ell $. This and the definition of $m_{\ell +1}$ give (\ref{3.12.2}).

If (\ref{3.12.3}) does not hold for $i$ and $g$ then we have, by the Markov property for $P^{n_\ell -r_\ell ,1}(g)$, 
$$S_{m_{\ell +1},g}P^{n_\ell -r_\ell ,1}(g)\subset g^i(S_{m_{\ell +1},g}(P^{n_\ell -r_\ell ,1}(g))\subset P^{n_\ell -r_\ell ,1}(g)$$
and hence $g^i(v_2(g))\in P^{n_\ell -r_\ell ,1}(g)$ and $g^{m_{\ell +1}-i}(v_2(g))\in P^{n_\ell -r_\ell ,1}(g)$. So $i=m_{\ell +1}-i=m_\ell $ by (\ref{3.12.2}) and $S_{m_{\ell +1}}=S_{m_\ell }^2$, which we know is not the case. So (\ref{3.12.3}) holds for $g$ and hence also for $h\in V(P^0,\cdots P^{n_\ell -r_\ell +m_{\ell +1}},g)$. Then (\ref{3.12.4}) follows from (\ref{3.12.2}).

Now we consider (\ref{3.12.0}). We have
$$n_3-r_3=j_3m_3+m_2'>(j_3-1)m_3+j_2m_2+n_1-r_1=n_2+(j_3-1)m_3,$$
which gives (\ref{3.12.0}) in the case $\ell =2$. If $\ell =3$ we have, using (\ref{3.12.5}) with $1$ replacing $\ell $ (to be done later), and also $m_3'\ge (j_2-1)m_2+m_2'$,
$$n_4-r_4=j_4m_4+m_3'\ge (j_4-1)m_4+j_3m_3+m_2+(j_2-1)m_2+m_2'$$
$$=(j_4-1)m_4+j_3m_3+j_2m_2+m_2'$$
$$ n_3=j_3m_3+n_2-r_2=j_3m_3+(j_2-1)m_2+m_2',$$
and so 
  $$n_4-r_4\ge (j_4-1)m_4+n_3+m_2.$$
Now since $P^{n_3,1}(h)$ is a union of sets of ${\mathcal{P}}_{n_3+m_2-m_2'}(h)$ 
we have (\ref{3.12.0}) for $\ell =3$ as required. 

Now we want to prove (\ref{3.12.0}) for $\ell \ge 4$, assuming that (\ref{3.12.0}) is true for $2\le \ell '<\ell $. By (\ref{3.12.5}) with $\ell -2$ replacing $\ell $ (to be done later) we have
$$m_{\ell +1}\ge j_\ell m_\ell +m_{\ell -1}.$$ 
Then
$$n_{\ell+1} -r_{\ell +1}=j_{\ell +1}m_{\ell +1}+m_{\ell }'\ge j_{\ell +1}(j_\ell m_\ell +m_{\ell -1})+m_\ell '.$$
But $m_\ell '\ge j_{\ell -1}m_{\ell -1}+m_{\ell-2}'$. So
$$n_{\ell+1} -r_{\ell +1}\ge (j_{\ell +1}-1)m_{\ell +1}+j_\ell m_\ell +(j_{\ell -1}+1)m_{\ell -1}+m_{\ell -2}',$$
while
$$n_\ell =j_\ell m_\ell +j_{\ell -1}m_{\ell -1}+m_{\ell -2}'.$$
 It follows, as in the case $\ell =3$, that
$$P^{n_{\ell +1}-r_{\ell +1}}(h)\subset S_{m_{\ell +1},h}^{j_{\ell+1}-1}(P^{n_\ell ,1} (h)),$$
as required for (\ref{3.12.0}).

Now we consider (\ref{3.12.1}), which we need to prove for all $\ell \ge 2$. By (\ref{3.12.0}) we have, for all $\ell \ge 2$,
$$P^{n_{\ell +1}-r_{\ell +1},1}(h)\subset S_{m_{\ell +1},h}^{j_{\ell +1}-1}(P^{n_\ell -r_\ell ,1}(h))$$

 Now (\ref{3.12.3}) (or (\ref{3.10.2}) if $\ell =2$)  gives (\ref{3.12.1}) with $P^{n_{\ell +1}-r_{\ell +1},1}(h)$ replaced by  the set $S_{m_{\ell +1},h}(P^{n_\ell -r_\ell ,1}(h))$. 
  If $j_{\ell +1}>1$ then this is enough to give (\ref{3.12.1}). So now suppose that $j_{\ell +1}=1$. 

 If (\ref{3.12.1}) does not hold then for $i$ then by the Markov property for $P^{n_{\ell +1}-r_{\ell +1},1}(h)$,
 $$P^{n_{\ell +1}-r_{\ell +1},1}(h)\subset h^i(P^{n_{\ell +1}-r_{\ell +1},1}(h))$$. 
 So
$$h^i(P^{n_{\ell +1}-r_{\ell +1},1}(h))\cap P^{n_{\ell}-r_\ell,1}(h)\ne \emptyset .$$
Then  by the Markov property, if $\ell -1$ is nonexceptional  or $\ell \ge 3$ is exceptional, 
$$ h^i(P^{n_{\ell +1}-r_{\ell +1},1}(h))\subset  P^{n_{\ell}-r_\ell,1}(h)$$
and then 
$$h^i(v_2)\in P^{n_\ell -r_\ell}(v_2)\Rightarrow i=m_\ell.$$
by (\ref{3.12.2}). If $\ell -1$ is an exceptional value and $\ell $ is non-exceptional,  we can only deduce $i=m_\ell $ for $i\le m_{\ell +1}-r_{\ell +1}-(m_2-m_2')$ because then
$$h^i(P^{n_{\ell +1}-r_{\ell +1},1}(h))\cap P^{n_{\ell +1}-r_{\ell +1},1}(h)\ne \emptyset \Rightarrow h^i(P^{n_{\ell +1}-r_{\ell +1}}(h))\cap P^{n_{\ell }-r_{\ell },1}(h)\ne \emptyset$$
$$\Rightarrow h^i(P^{n_{\ell +1}-r_{\ell +1}}(h))\subset P^{n_{\ell +1}-r_{\ell +1}-i}(h)\subset P^{n_\ell -r_\ell ,1}(h)$$
because $P^{n_\ell -r_\ell ,1}(h)$ is a union of sets of ${\mathcal{P}}_{n_\ell -r_\ell +m_2-m_2'}(h)$ and 
$$n_\ell -r_\ell +m_2-m_2'=n_{\ell +1}-m_{\ell +1}+m_2-m_2'\le n_{\ell +1}-r_{\ell +1}-i.$$

Now if (\ref{3.12.1}) does not hold for $i$ we have
\begin{equation} P^{n_{\ell +1}-r_{\ell +1}}(h)\subset h^i(P^{n_{\ell +1}-r_{\ell +1}}(h))\label{6.2.20}\end{equation}
  We write, still assuming $j_{\ell +1}=1$,
$$P^{n_{\ell +1}-r_{\ell +1}}(h)=S_{m_{\ell +1},h}S'P^0(h),$$
where 
$$S'=S_{m_\ell ,h}h^{m_\ell -m_\ell '},$$
We write
$$S_{m_{\ell +1},h}=S_{m_\ell ,h}^{j_\ell }U=S^{j_\ell }U$$
and thus $S'$ is a prefix of $S$. Then
$$P^{n_{\ell +1}-r_{\ell +1}}(h)=S^{j_\ell }US'P^0(h).$$
Then (\ref{6.2.20}) is equivalent to showing that certain suffixes of $S^{j_\ell }US'$ are not also prefixes.   We also know by the definition of $j_\ell $ and $m_{\ell +1}$ that $S$ is not a prefix of $U$ and also not a suffix of $U$. We only need to consider suffixes of the form $S^{j_\ell -1}US'$ (for $i=m_\ell $) and suffixes of the form $U_1S'$ for $U_1$ a  suffix of $S^{j_\ell }U$ with $|U_1|\le r_{\ell +1}$, except when $\ell =2$ or $\ell =3$ and $3$ is not exceptional or $\ell \ge 4$ and $\ell -1$ is exceptional, when it suffices to consider $i\le r_{\ell +1}+m_2-m_2'$.   Now $i=m_\ell $ is impossible if $|U|\ge |S|$ because then $S$ must be a prefix of $U$, giving a contradiction. So for that case we only need to consider $|U|<|S|$.  The question is then whether $US'$ is a prefix of $SUS'$, that is, whether $US'$ is a prefix of $SU$.

Now we consider $|U_1|\le r_{\ell +1}$. We have 
$$r_3=(j_2-1)m_2<j_2m_2$$
and for $\ell \ge 3$
$$r_{\ell +1}=j_{\ell }m_{\ell }+m_{\ell -1}'-m_\ell '\le j_\ell m_\ell .$$
So we have $|U_1|\le j_\ell m_\ell $ We must then have $U_1=S^rU_2$ for some $r\le j_\ell -1$ and $0<|U_2|<|S|$ because $U$ does not have $S$ as a suffix. Then $U_2S'$ is a prefix of $S^{j_\ell -r}U$. If $j_\ell -r=1$ we obtain that $U_2S'$ is a prefix of $SU$, and if $j_\ell -r\ge 2$ we obtain that $U_2S'$ is a prefix of $S^2$. In the latter case, that is, $r\le j_\ell -2$ we obtain that $U_2S'$ is a prefix of $SU_2$. If $r=j_\ell -1$, 
$$|U_2|=|U_1|-(j_\ell-1)m_\ell \le m_\ell -m_\ell '+m_{\ell -1}'$$
and $U_2S'=SU'$ with 
$$|U'|=|U_2|+|S'|-|S|\le m_{\ell -1}'$$
Now since $P^{n_{\ell +1}-r_{\ell +1}}(h)\subset P^{n_\ell }(h)=S^{j_\ell }P^{n_{\ell -1}-r_{\ell -1}}(h)$ we see that $S_{m_{\ell-1},h}'$ is a prefix of $U$ for all $\ell \ge 3$ and $S_{m_{\ell -1},h}^{j_{\ell -1}}S_{m_{\ell -2},h}'$ is a prefix of $U$ for all $\ell \ge 4$. So  $U'$ is a prefix of $S$ and hence also of $U_2$, since $|U'|<|U_2|$. 

Now we need to consider $|U_1|\le r_{\ell +1}+m_2-m_2'$ if $\ell =2$ or $\ell =3$ is exceptional or $\ell \ge 4$ and $\ell -1$ is exceptional. We have
$$r_3+m_2-m_2'=(j_2-1)m_2+m_2-m_2'=j_2m_2-m_2',$$
If $3$ is not an exceptional value then $m_3'\ge m_2$ so
$$r_4+m_2-m_2'\le j_3m_3+m_2-m_3'\le j_3m_3.$$
If $\ell \ge 4$ and $\ell -1$ is exceptional then we can prove by induction on $i$ that $m_i-m_i'\le m_2-m_2'$ for all $2\le i\le \ell -1$. This is obviously true for $i=2$, and is true for $i=3$ because $m_3'=m_2'$. For $4\le i\le \ell -1$ we have $m_i=j_{i-1}m_{i-1}+m_{i-2}$ and $m_i'=j_{i-1}m_{i-1}+m_{i-2}'$, so $m_i'-m_i=m_{i-2}'-m_{i-2}$ and the induction is completed. This gives 
$$r_{\ell +1}+m_2-m_2'=j_{\ell }m_{\ell }+m_{\ell -1}'-m_{\ell -1}+m_{\ell -1}-m_\ell '+m_2-m_2'$$
$$=j_{\ell }m_{\ell }+m_{\ell -1}-m_\ell '\le j_\ell m_\ell$$
Arguing as in the case of $|U_1|\le r_{\ell +1}$ we obtain that $U_1=S^rU_2$ with $U_2S$ a prefix of $SU_2$ or $r=j_\ell -1$ and $|U_2|\le m_{\ell -1}$. This gives $U_2S'=SU_2'$ for a prefix $U_2'$ of $U_2$ if $\ell \ge 4$. We leave aside for the moment the case $\ell =2$ and $\ell =3$ if $3$ is an exceptional value. 
 
 So now suppose that    $0<|U_1|<|S|$ and that  $U_1S'$ is not a prefix of $SU$, that is $U_1S'$ is a prefix of $SU_1$. From this point on, $U$ itself is irrelevant and we are rewriting $U_2=U_1$ in the later cases above. Now if $U_1S'$ is  a prefix of $S$ with $|U_1|\le n_1$ then we deduce from this that $S'=S_{m,h}^qT$ for  $T$ a (possibly trivial) prefix of $S_{m,h}$, which is only possible at all if $\ell =3$ is exceptional, and we are leaving this case aside for the moment. If $|U_1|>n_1$ then $S'=U_1^qU_1'$ for a prefix $U_1'$ of $U_1$ and some $q\ge 0$  and $U_1=U_1'U_1''$, and $U_1S'=U_1^{q+1}U_1'=S'U_1''U_1'$ is a prefix of $S$, contradicting the definition of $S'$, since $U_1''U_1'$ must contain $S_{m,h}^{i_1}$ as a subword,  because $U_1$ starts with $S_{m,h}^{j_1}$.  So now we assume that 
 $$U_1S'=SU_1'$$
  for $|U_1|<|S|$ and $S'$, $U_1$, $U_1'$ all prefixes of $S$, and we will obtain a contradiction, apart from some exceptions.  Note that if $S'=S$ then $S=U_1V_1$ and $U_1U_1V_1=U_1V_1U_1$ gives $U_1V_1=V_1U_1$, a contradiction because then $\bigcap _{n\ge 0}S^nP^{n_\ell }$ is a point of  period $<m_\ell $. 

We change notation and write $A_1=S$, $A_1'=S'$, $A_2=U_1$ and $A_2'=U_1'$ where all of $A_1'$, $A_2$ and $A_2'$ are prefixes of $A_1$ and $A_2'$ is also a prefix of $A_2$ and we have
\begin{equation}\label{3.12.7}A_1A_2'=A_2A_1'\end{equation}
We have
$$|A_1|-|A_1'|=|A_2|-|A_2'|,\ \ \ |A_2|<|A_1|$$
Now write
$$A_1=A_2A_{3,1}$$
Then 
$$A_2A_{3,1}A_2'=A_2A_1'$$
and 
$$A_{3,1}A_2'=A_1'$$
Either $A_1'=A_2A_{3,1}'$ for a prefix $A_{3,1}'$ of $A_{3,1}$ or $|A_1'|<|A_2|$. In the former case we  have
$$A_{3,1}A_2'=A_2A_{3,1}',$$
and then either $|A_2|\le |A_{3,1}|$ in which case we can write $A_{3,1}=A_2A_{3,2}$ and can continue. In the latter case we write $r_1=1$ and $A_{3,1}=A_3$ and
$$A_1=A_2^{r_1}A_3,\ \ A_1'=A_2^{r_1-1}A_3.$$
In general we obtain an integer $r_1>0$ and $A_3$, $A_3'$ such that either
$$A_1=A_2^{r_1}A_3,\ \ A_1'=A_2^{r_1}A_3',\ |A_3'|<|A_3|<|A_2|,$$
or
$$A_1=A_2^{r_1}A_3,\ \ A_1'=A_2^{r_1-1}A_3', |A_3|< |A_3'|<|A_2|$$
$$|A_1|-|A_1'|=|A_2|+|A_3|-|A_3'|$$
In the first case (\ref{3.12.7}) gives
$$A_2^{r_1}A_3A_2'=A_2^{r_1+1}A_3'$$
and hence
$$A_3A_2'=A_2A_3'$$
and we can continue to find $r_2$ and $A_4$. In the second case we obtain
$$A_2^{r_1}A_3A_2'=A_2^{r_1}A_3'$$
and hence 
$$A_3A_2'=A_3'$$
and
$$|A_3|+|A_2'|=|A_3'|.$$
If $A_3$ is trivial in this case we have $A_1=A_2^{r_1}$ and since $|A_2|<|A_1|$ we have $r_1>1$ which gives a contradiction. Since $A_i$ is decreasing with the first case we must reach the second case for some $N\ge 1$ that is
$$A_{i+1}A_i'=A_iA_{i+1}',\mbox{ for }i\le N$$
$$A_i=A_{i+1}^{r_i}A_{i+2},\ A_i'=A_{i+1}^{r_i}A_{i+2}'\mbox{ for }i<N,$$
$$|A_i|-|A_i'|=|A_1|-|A_1'|\mbox{ for }i\le N+1$$
\begin{equation}\label{3.12.8}A_{N+2}A_{N+1}'=A_{N+2}',\end{equation}
\begin{equation}\label{6.2.9}A_{N}=A_{N+1}^{r_{N}}A_{N+2},\ \ A_N'=A_{N+1}^{r_{N}-1}A_{N+2}',\end{equation}
where
$$(r_{N}-1)|A_{N+1}|+|A_{N+2}'|=|A_N'|<r_{N}|A_{N+1}|< |A_N|$$
and
$$|A_N|-|A_N'|=|A_{N+2}|+|A_{N+1}|-|A_{N+2}'|>|A_{N+2}|.$$
All $A_i$, $A_i'$ are prefixes of $A_1$. $A_i$ is a suffix of $A_1$ for odd $i$ and a suffix of $A_2$ for even $i$. $A_i'$ is a suffix of $A_1'$ for all $i$. From (\ref{3.12.8}) we obtain
\begin{equation}\label{6.2.10}A_{N+2}'=A_{N+2}^{r_{N+1}}A_{N+3}',\ \ A_{N+1}'=A_{N+2}^{r_{N+1}-1}A_{N+3}'\end{equation}
where $A_{N+3}'$  is a prefix of $A_{N+2}$. 
Write
$$A_1=A_1'B_1, A_2=A_2'B_2.$$
$$A_i=A_i'B_i,\ \ i\le N+1.$$
Then $|B_i|=|B_1|$ for $i\le N+1$ and 
$$B_i=B_1\mbox{ if }i\mbox{ is odd,}$$ 
$$B_i=B_2\mbox{ if }i\mbox{ is even.}$$
Now we recall that $A_1=S_{m_\ell }$ and $A_1'=S_{m_\ell }'$. 
$B_1$ does not start with $S_m$ and does not have $S_m^{i_1}$ as a subword. $A_1'$ ends with $S_m^{i_1}$ and hence $S_m^{i_1}$ is a suffix of $A_i'$ for all  $i$. 
From $A_N=A_N'B_N$ and(\ref{6.2.9}) we obtain
$$A_{N+1}A_{N+2}=A_{N+2}'B_N.$$
Then
$$A_{N+1}'B_{N+1}A_{N+2}=A_{N+2}'B_N.$$
Then from (\ref{6.2.10}), 
$$A_{N+2}^{r_{N+1}-1}A_{N+3}'B_{N+1}A_{N+2}=A_{N+2}^{r_{N+1}}A_{N+3}'B_N$$
and
$$A_{N+3}'B_{N+1}A_{N+2}=A_{N+2}A_{N+3}'B_N.$$
So writing $A_{N+2}=A_{N+3}'A_{N+3}''$ we have
$$B_{N+1}A_{N+2}=A_{N+3}''A_{N+3}'B_N.$$
$|B_i|>|A_{N+2}|$ for all $i\le N+1$, so there is a subword $C$ such that
$$B_{N+1}=A_{N+3}''A_{N+3}'C,\ \ B_N=CA_{N+2}.$$
So one of $A_{N+2}$ and $A_{N+3}''A_{N+3}'$ does not contain a subword $S_m^{i_1}$. But --- recalling again that $A_{N+2}$ is a prefix of $A_1=S_{m_\ell ,h}$ --- $A_{N+2}$ starts with $S_m^{j_1}$ or is of shorter length that this. If it starts with $S_m^{2j}$ then one of $A_{N+3}''$ or $A_{N+3}'$ contains a subword $S_m^j$. So $A_{N+2}$ is a proper prefix of $S_m^{2i_1}$. Then it must be a proper prefix of $S_m^{i_1}$ because otherwise both $A_{N+2}$ and $A_{N+3}''A_{N+3}'$ contain $S_m^{i_1}$ and a subword and hence  both $B_N$ and $B_{N+1}$ contain $S_m^{i_1}$ as a subword. Now we have
$$A_{N+1}'=S_m^{p_{N+1}},$$
$$A_{N+1}=S_m^{p_{N+1}+q_{N+1}}C$$
This is only possible if $p_{N+1}+q_{N+1}=j_1$. 
So 
$$A_{N+1}=S_m^{j_1}C.$$
Then
$$A_{N}=(S_m^{j_1}C)^{r_N}S_m^{t_0}$$
where $A_{N+2}=S_m^{t_0}$ for some $t_0<i_1$. Then we get $A_i$ for all $i$. and $B_i=CS_m^{t_0}$ if $i$ is odd and   $S_m^{t_0}C$ if $i$ is even. In particular $N$ is odd or $t_0=0$. But if $t_0=0$ then $A_{N+2}$ is trivial and we are done. So assume $t_0>0$ and $N$ is odd
$$A_N'=(S_m^{j_1}C)^{r_N-1}S_m^{j_1}.$$
Then
$$A_{N-1}=A_N^{r_{N-1}}A_{N+1}$$
$$A_{N-1}'=A_N^{r_{N_1}}A_{N+1}'$$

Now we recall again that $A_1=S=S_{m_\ell,h}$ for a suitable $h$. With this choice we have $S_{m_2,h}=A_{N+1}$ and $S_{m_3,h}=A_N$ and thus $N=\ell -2$ and we have
$$S_{m_i}=A_{N+2+i-\ell},\ \ j_i=r_{N+2+i-\ell }$$
$$S_{m_3}=S_{m_2}^{j_2}S_m^{t_0},$$
$$S_{m_{i+1},h}=S_{m_i,h}^{j_{i}}S_{m_{i-1},h},\ \ 3\le i\le \ell -1.$$
So except in this case,  and the case $\ell =2$, the proof of (\ref{3.12.1}) is completed. Note that the assumption that $N$ is odd can no longer be made because in these exceptional cases the inductive hypothesis, that (\ref{3.12.1}) is true for $\ell '<\ell $ replacing $\ell $, fails. So the proof is completed precisely except in the cases when $\ell $ is an exceptional value

 Now we consider (\ref{3.12.5}) --- leaving out the case of $\ell $ being an exceptional value, but including $\ell =0$ and $\ell =1$. From the fact that $h^i(P^0(h))$ are disjoint for $0\le i<m$ we have $m_3\ge j_2m_2+m$, which completes the case $\ell =0$. So now we consider $\ell \ge 1$. We have $g^{m_3}(v_2(g))\in P^{n_2-r_2,1}(g)$ and $g^{m_{\ell +2}}(v_2(g))\in P^{n_{\ell +1}-r_{\ell +1}}(g)$ for $\ell \ge 2$. Then by the definition of $j_{\ell +2}$ we have $v_2(g)\in S_{m_3,g}^{j_3}(P^{n_2-r_2,1}(g)$ and hence $g^{j_3m_3}(v_2(g))\in P^{n_2-r_2,1}(g)$ and similarly $g^{j_{\ell +1}m_{\ell +1}}(v_2(g))\in P^{n_{\ell +1}-r_{\ell +1}}(g)$ for $\ell \ge 2$. But by (\ref{3.10.11}) the sets $g^i(P^{n_2-r_2,1}(g))$ are disjoint for $0\le i<m_2$, and by (\ref{3.12.1}), for $\ell \ge 2$, the sets $g^i(P^{n_{\ell +1}-r_{\ell +1}}(g))$ are disjoint from $P^{n_{\ell +1}-r_{\ell +1}}(g)$ for $0<i<m_{\ell +1}$, replacing $P^{n_3-r_3}(g)$ by $P^{n_3-r_3,1}(g)$ in the case $\ell =2$. In particular, $g^{i+j_3m_3}(v_2(g))\notin P^{n_2-r_2,1}(g)$ for $0<i<m_2$ and   $g^{i+j_{\ell +2}m_{\ell +2}}(v_2(g))\notin P^{n_{\ell +1}-r_{\ell +1}}(g)$ for $0<i<m_{\ell +1}$. So for $\ell \ge 1$,
 $$m_{\ell +3}\ge j_{\ell +2}m_{\ell +2}+m_{\ell +1},$$
 giving (\ref{3.12.5}).

Now we consider the case of $\ell $ being an exceptional value.

The previous definitions can be restated as
$$P^{n_2-r_2,0}(h)=S_{m_2,h}(h^{m_2}(P^{n_2-r_2,0}(h))),$$
where $h^{m_2}(P^{n_2-r_2,0}(h))$ is a union of sets of ${\mathcal{P}}_0(h)$ which can be taken arbitrarily close to $P^0(h)$. Thus we are simply enlarging the domain of $S_{m_2,h}$. Note that by (\ref{3.10.11}), $S_{m_2,h}$ is  two-valued on this domain, as it is on $P^{n_2}(h)$. Similarly,
$$P^{n_3-r_3,0}(h)=S_{m_2,h}(h^{m_2}(P^{n_3-r_3,0}(h))),$$
and $S_{m_2,h}$ is again two-valued on this enlarged domain, by (\ref{3.10.12}).  Recall that
$$P^{n_2-r_2,1}(h)=S_{m_2,h}^{j_2-1}(P^{n_2-r_2,0}(h)).$$ 

Recall that from \ref{3.10} we have
$$\varphi _2^{-1}(P^{n_2}(f_2))\subset Q_2\subset \varphi _2^{-1}(P^{n_2-r_2,1}(f_2)),$$
$$\varphi _3^{-1}(P^{n_3,1}(f_3)\subset Q_3\subset \varphi _3^{-1}(P^{n_3-r_3,1}(f_3)).$$

 Also we have, for $4\le i\le \ell $, for some local inverse $U_{i,h}$,
$$S_{m_i,h}=U_{i,h}S_{m_2,h}\mbox{ if }i\mbox{ is even,}$$
$$S_{m_i,h}=U_{i,h}S_{m_2,h}S_{m,h}^{t_0}\mbox{ if }i\mbox{ is odd,}$$
as $S_{m_3,h}=S_{m_2,h}^{j_2}S_{m,h}^{t_0}$. In particular, $U_{2,h}$ is the identity and and $U_{3,h}=S_{m_2,h}^{j_2-1}$
Then, for all $3\le i<\ell$,
$$U_{i+1,h}=S_{m_{i},h}^{j_{i}}U_{i-1,h}.$$
Then for $3\le i\le \ell +1$, we  define
$$P^{n_i-r_i,1}(h)=S_{m_i,h}^{j_i}U_{i-1,h}(P^{n_3-r_3,0}(h)),$$
and  if $4\le i\le \ell +2$, we define
$$P^{n_i,1}(h)=S_{m_i,h}^{j_i}P^{n_{i-1}-r_{i-1},1}(h).$$

 Since $P^{n_2-r_2,1}(h)\subset P^{n_2-r_2}(h)$, we obtain $P^{n_i-r_i,1}(h)\subset P^{n_i-r_i}(h)$ for all $4\le i\le \ell $, and similarly we obtain $P^{n_i,1}(h)\subset P^{n_i}(h)$ and for $i\ge 2$,  $P^{n_i-r_i,1}(h)$ is a union of sets of ${\mathcal{P}}_{n_i-r_i+m_2-m_2'}(h)$ and $P^{n_i,1}(h)$ is a union of sets of ${\mathcal{P}}_{n_i+m_2-m_2'}(h)$. Also, by definition, $P^{n_i-r_i,1}(h)$ is a component of $h^{m_2'-n_i+r_i}(P^{n_3-r_3,0}(h))$. This property was used in the proof of (\ref{3.12.2}) to (\ref{3.12.0}), which therefore hold. 

For  (\ref{3.12.1}) for an exceptional $\ell $, we need that no suffix of any of the words $S_{m_{\ell+1},h}^{j_{\ell+1}}U_{\ell,h}S_{m_2,h}$, for varying $t$, is also a prefix. Arguing as in the nonexceptional case, if some suffix is a prefix then for $S=S_{m_\ell,h}$ and some proper prefix $U_1$ of $S$  we have
$$SU_1=U_1S.$$
Now arguing as in the nonexceptional case, we obtain a prefix $A$ of $A$ with $|A|\le |U_1|<|S|$ and an integer $n>1$ such that $S=A^n$. This gives a contradiction because it contradicts (\ref{3.12.3}) with $\ell -1$ replacing $\ell $ for example, that is, that $h^i(S_{m_\ell ,h}(P^{n_{\ell -1}-r_{\ell -1},1}(h)))$ are disjoint for $0\le i<m_\ell $. So now we have (\ref{3.12.1}) in all cases, and then we obtain (\ref{3.12.5}) as before. 

\end{proof}

 \subsection{Construction of $Q_{\ell +1}$} 
   
For $\ell \ge 2$, that is, $\ell +1\ge 3$, the set $Q_{\ell +1}$ is defined in terms of a set $Q_{\ell +1}'$. The construction for $\ell =2$ has already been done so now we consider $\ell \ge 3$, assuming that for $i\le \ell +1$ we have
$$\varphi _i\circ (s_{y_i}\Amalg s_q)=f_i\circ \varphi _i.$$
 We define $T_{m_{\ell +1}}$ to be the local inverse of $s^{m_{\ell +1}}$ which satisfies
$$\varphi _{\ell +1}\circ T_{m_{\ell +1}}=S_{m_{\ell +1},f_{\ell +1}}\circ \varphi _{\ell +1}.$$ 
Then we define 
$$Q_{\ell +1}=T_{m_{\ell +1}}^{j_{\ell +1}}(Q_{\ell +1}')$$
So we need to define $Q_{\ell +1}'$, for $\ell \ge 3$. We need $Q_{\ell +1}'$ to contain $\varphi _{\ell +1}^{-1}(P^{n_\ell -r_\ell ,1}(f_{\ell +1}))$.  The following is proved in exact analogue to \ref{3.40}. So the proof is omitted
\begin{lemma} \label{3.50}For any $n\ge 0$, let $R(g)$ be an isotopically varying set of ${\mathcal{P}}_n(g)$ which is not strictly contained in the backward orbit of $P^{n_\ell -r_\ell ,1}(g)$, for $g\in V(P^0,\cdots P^{n_\ell },f_\ell )$. Then
 $$\varphi _{\ell +1}^{-1}(R(f_{\ell +1}))=\varphi _{\ell}^{-1}(R(f_{\ell })).$$
   \end{lemma}
 In particular, we have

$$\varphi _{\ell +1}^{-1}(P^{n_\ell -r_\ell ,1}(f_{\ell +1}))=\varphi _\ell ^{-1}(P^{n_\ell -r_\ell,1}(f_\ell )),$$
Now $Q_{\ell +1}'$ is defined by adding to $\varphi _{\ell }^{-1}(P^{n_\ell -r_\ell,1}(f_{\ell }))\cap S^1$ shorter complementary components so that $Q_{\ell +1}'$ has just two components, both of which intersect the boundary of the minor gap of $L_{y_\ell }$.
So in order to proceed, we prove the following. 

\begin{lemma}\label{3.31} 
For $\ell \ge 3$,
\begin{equation}\label{3.31.2}\varphi _{\ell +1}^{-1}(P^{n_{\ell +1},1}(f_{\ell +1}))\subset Q_{\ell +1} \subset \varphi _{\ell +1}^{-1}(P^{n_{\ell +1}-r_{\ell +1},1}(f_{\ell +1})).\end{equation}
\end{lemma}
\begin{proof}
We have already defined $Q_{\ell +1}'$ and $Q_{\ell +1}=T_{m_{\ell +1}}^{j_{\ell +1}}(Q_{\ell +1}')$ so that the first inclusion of  (\ref{3.31.2}) holds. In order to bound $Q_{\ell +1}'$ -- and hence also bound $Q_{\ell +1}$, we use the definition of $n_{\ell +1}-r_{\ell +1}$ as
$$n_{\ell +1}-r_{\ell +1}=j_{\ell +1}m_{\ell +1}+m_\ell '.$$
So it suffices to prove that
\begin{equation}\label{3.31.3}Q_{\ell +1}'\subset \varphi _\ell ^{-1}(S_{m_\ell ,f_\ell }'P^0(f_\ell )),\end{equation}
where as before $S_{m_\ell ,f_\ell }'=S_{m_\ell ,f_\ell }\circ f_\ell ^{m_\ell -m_\ell '}$ 
with $S_{m_\ell ,f_\ell }'P^0$ replaced by a union of sets in the exceptional cases. 
By \ref{3.12}, in particular, (\ref{3.12.1}), $f_\ell ^{m_\ell -1}$ is a  homeomorphism on $P^{n_\ell -r_\ell }(f_\ell )$, with $P^{n_\ell -r_\ell }$ replaced by $P^{n_\ell -r_\ell ,1}(f_\ell)$ in the exceptional cases. Hence $s^{m_\ell -1}$ is a homeomorphism on $Q_{\ell +1}'$, as this is obtained by adding in complementary components to $\varphi _\ell ^{-1}(P^{n_\ell -r_\ell }(f_\ell ))$ (or $\varphi _\ell ^{-1}(P^{n_\ell -r_\ell ,1}(f_\ell ))$) on which the boundary is mapped homeomorphically. So then by definition of $m_\ell '$, in the nonexceptional cases. since $m_\ell -m_\ell '\ge r_1$ and $s^{m_\ell '}(P^{n_\ell -r_\ell })\subset \varphi _\ell ^{-1}(P^{r_1}(f_\ell ))$, we see that
$s^{m_\ell '}(Q_{\ell+1} ')\subset \varphi _\ell ^{-1}(P^0(f_\ell ))$ as required for (\ref{3.31.3}) and for the second inclusion of (\ref{3.31.2})

It remains to consider the exceptional cases and the second inclusion of (\ref{3.31.2}).  We already chose $P^{n_3-r_3,0}$ (see \ref{3.12})  so that
$$Q_3'\subset \varphi _3^{-1}(P^{n_3-r_3,0}(f_3))$$
and then 
$$Q_3\subset \varphi _3^{-1}(P^{n_3-r_3,1}(f_3)).$$
 Then recalling the definition of $U_{i,h}$ in \ref{3.12}, we see that, if $Q_2'=s^{(j_2-1)m_2}(Q_2)$ then
$$Q_i'=V_{i-1}Q_2',$$
$$Q_i=T_{m_i}^{j_i}V_{i-1}Q_2'$$
for $i\ge 4$ where  and
$$\varphi _i\circ V_{i-1}=U_{i-1,f_i}\circ \varphi _i.$$
Since $U_{3,h}=S_{m_2,h}^{j_2-1}$ this actually gives
 $$Q_4'=Q_2,$$
$$Q_4=T_{m_4}^{j_4}Q_2.$$
 Then, for $i\ge 4$,
  $$Q_i'\subset \varphi _{i}^{-1}(U_{i-1,f_{i}}P^{n_3-r_3,0}(f_{i})$$
 and hence
 $$Q_i\subset \varphi _i^{-1}S_{m_i,f_i}^{j_i}U_{i-1,1}P^{n_3-r_3,0}(f_i))=\varphi _i^{-1}(P^{n_i-r_i,1}(f_i)).$$
 
  \end{proof}
   
  \subsection{Proof of Theorem \ref{3.5}}
   Now all the machinery is in place. Assume inductively that $\ell <N$ and 
   $$f_\ell \simeq s_{y_\ell }\Amalg s_q$$ and consequently we have $\varphi _\ell $ with 
   $$\varphi _\ell \circ (s_{y_\ell }\Amalg s_q)=f_\ell \circ \varphi _\ell .$$
   We have 
   $$f_{\ell +1}, f_\ell \in V(P^0,\cdots P^{n_\ell },g).$$
  We proceed as in \ref{3.5} and \ref{3.6} in the case $\ell =1$. For all $h\in V(P^0,\cdots P^{n_\ell ,1},g)$ we have
  $$S_{m_{\ell +1},h}(P^{n_\ell ,1}(h))\subset P^{n_\ell,1 }(h)$$
  and $S_{m_{\ell +1},h}(P^{n_\ell ,1}(h))$ contains one or two points  of period $m_{\ell +1}$: exactly two unless there is a single point which is a parabolic point of $h$. Putting $h=f_\ell $ for at least one of these periodic points, which we call $z_{\ell +1}(f_\ell )$, is such that the points in $\varphi _\ell ^{-1}(z_{\ell +1}(f_\ell ))$ are of period $m_{\ell +1}$. The proof is exactly the same as in \ref{3.6}. $T_{m_{\ell +1}}$ is a  two-valued contraction on $\varphi _\ell ^{-1}(P^{n_\ell -r_\ell ,1}(f_\ell ))$, which is mapped into $\varphi _\ell ^{-1}(P^{n_\ell ,1}(f_\ell ))$, and hence also at most two-valued on $Q_\ell $. So there are at most two fixed points of $T_{m_{\ell +1}}$ which must map to at least one of the points $z_{\ell +1,i}$ since the sets $f_\ell ^i(S_{m_{\ell +1},f_\ell }(P^{n_\ell ,1}(f_\ell )))$ are disjoint for $0\le i<m_{\ell +1}$  Fix one of these fixed points of $T_{m_{\ell +1}}$ and call it $e^{2\pi iy_{\ell +1}}$. Choose $w_{\ell +1}$ such that $e^{2\pi iw_{\ell +1}}$ is in the boundary of the minor gap of $L_{y_|ell }$ and in the same component $Q_{\ell ,1}$ of $Q_\ell $ as $e^{2\pi iy_{\ell +1}}$, and of period $t_{\ell +1}m_\ell $ where $t_{\ell +1}$ is the least integer such that $S_{m_\ell}^{t_{\ell +1}}Q_{\ell ,1}\cap Q_{\ell ,1}\ne \emptyset $. This is possible by  the property (\ref{3.12.5}). There is then a tuning $f_{\ell +1/2}$ of $f_\ell $ such that
  $$f_{\ell +1/2}\simeq s_{w_{\ell +1}}\Amalg s_q$$
  and
  $$\varphi _{\ell +1/2}\circ (s_{w_{\ell +1}}\Amalg s_q)=f_{\ell +1/2}$$ and
  $$\varphi _{\ell +1/2}\circ \xi _{\ell +1/2}=\varphi _\ell $$
  where $\xi _{\ell +1/2}$ is continuous and approximated by homeomorphisms and maps the Julia set of $f_\ell $ to the Julia set of $f_{\ell +1/2}$, maps $G_n(f_\ell)$ to $G_n(f_{\ell +1/2})$  for all $n$, and maps $v_2(f_\ell )$ to $v_2(f){\ell +1/2})$ and $c_2(f_\ell )$ to $c_2(f_{\ell +1/2})$, and for $z$ in the Julia set of $f_\ell $, 
  $$\xi _{\ell +1/2}\circ f_\ell (z)=f_{\ell +1/2}\circ \xi _{\ell +1/2}(z).$$
 
   Then we can construct paths $\beta _\ell \subset P^{n_\ell ,1}(f_\ell )$ and $\zeta _\ell $ exactly as in \ref{3.7} so that $\beta _\ell $ is the union of a path in the Fatou component of $v_2(f_\ell )$ from $v_2(f_\ell )$ to $\varphi  _{\ell +1/2}(e^{2\pi iw_{\ell +1}})$. 
   $$f_\ell \circ \zeta _\ell =\beta _\ell ,$$
   $$S_{m_\ell, f_\ell}(\beta _\ell )\subset \beta _\ell ,$$
   $$f_{\ell +1}\simeq s_{\zeta _\ell }^{-1}\circ \sigma _{\beta _\ell }\circ f_{\ell +1/2}.$$
   But we also have 
   $$s_{y_{\ell +1}}\Amalg s_q\simeq \sigma _{\omega _\ell }^{-1}\circ \sigma _{\alpha _\ell }\circ s_{w_{\ell +1}}\Amalg s_q$$
   where $\alpha _\ell $ is an injective  path in $Q_{\ell }\subset S^1$ from $e^{2\pi iw_{\ell +1}}$ to $e^{2\pi iy_{\ell +1}}$ and $\omega _{\ell }$ is the path in $s^{-1}(\alpha _\ell )$ with periodic endpoints with $s\circ \eta _\ell =\alpha _\ell $. This path does exist, since $z_{\ell +1}=\varphi _\ell (e^{2\pi iy_{\ell +1}})\in S_{m_{\ell +1},f_{\ell +1/2}}(P^{n_\ell ,1}(f_\ell ))$ and hence 
   $$f^{m_{\ell +1}-1}(z_{\ell +1}(\ell +1/2))\in f_{\ell +1/2}^{-1}(P^{n_\ell ,1}(f_{\ell +1/2} )).$$
   So the proof is completed by showing that $\varphi _{\ell +1/2}(\alpha _\ell )$ is homotopic to $\beta _\ell $  via a homotopy fixing endpoints and the periodic orbit of $z_{\ell +1}(f_\ell )$, since the same will then be true for $\varphi _{\ell +1/2}(\eta _\ell )$ and $\zeta _\ell $. After homotopy we can assume that $\beta _\ell $ is the union of a path in the Fatou component of $v_2(f_\ell )$ to the point $\varphi _\ell (e^{2\pi iw_\ell })$ in the boundary, and a path $\beta _\ell '$ from $\varphi _{\ell +1/2}(e^{2\pi iw_{\ell +1}})$ to $z_{\ell +1}(f_{\ell +1/2})$ such that 
   $$\beta _\ell '\cap S_{m_\ell }^i(\beta _\ell ')=\emptyset ,\mbox{ for }0<i<t_{\ell +1}$$
    and 
   $$S_{m_\ell t_{\ell +1},f_{\ell +1/2}}\beta _\ell \subset \beta _\ell ',$$
   $$\beta _\ell '\subset f^{m_\ell }(\beta _\ell ')\mbox{ or }\beta _\ell '\cap f^{m_\ell }(\beta _\ell ')=\emptyset ,\mbox{ depending on whether }t_{\ell +1}=1\mbox{ or }t_{\ell +1}>1.$$
   We have similar properties for $\varphi _{\ell +1/2}(\alpha _\ell ')$ since  $\alpha _\ell $ is the union of a path in the minor gap of $L_{y_\ell }$ and a path $\alpha _\ell '\subset Q_{\ell ,1}\subset S^1$ from $e^{2\pi iw_{\ell +1}}$ to $e^{2\pi iy_{\ell +1}}$ which  is  contained in or disjoint from $s^{m_\ell }(\alpha _\ell ')$ and  $T_{t_{\ell +1}m_{\ell }}(\alpha _\ell '\subset \alpha _\ell '$, depending on whether $t_{\ell +1}=1$ or $t_{\ell +1}>1$,where $T_{t_{\ell +1} m_\ell}$ is the branch of $T_{m_\ell }^{t_{\ell +1}}$ with $\varphi _\ell \circ T_{t_{\ell +1}m_\ell }=S_{t_{\ell +1}m_\ell ,f_\ell }\circ \varphi _\ell $ (which is the same as the corresponding statement being true with $\ell +1/2$ replacing $\ell $). Extend $\alpha _\ell '$ beyond $e^{2\pi iy_\ell }$ to the boundary of $Q_{\ell ,1}$ to a path $\alpha _{\ell ''}$ which still satisfies $T_{t_{\ell +1}m_\ell}(\alpha _\ell '')\subset \alpha _\ell ''$.
   
   Then as in \ref{3.9}, using invariance, we can show that $\beta _\ell $ and $\varphi _{\ell +1/2} (\alpha _\ell )$ are homotopic via a homotopy preserving endpoints and the forward orbit of $z_{\ell +1}(\ell +1/2)$. By \ref{3.12}, for $\ell \ge 2$, the only point in the forward orbit of $z_{\ell +1}(\ell +1/2)$ which might prevent the homotopy is $f_{\ell +1/2}^{m_\ell}(z_{\ell +1}(\ell +1/2))$. If the homotopy does not hold, then we look at the last essential intersection of $\beta _\ell $ with $\varphi _{\ell +1/2}(\alpha _\ell ')$, map forward under $f_\ell ^{m_\ell }$  and obtain a contradiction.

   \end{document}